\newtheorem{theorem}{Theorem}[section]
\newtheorem{lem}[theorem]{Lemma}
\newtheorem{mainthm}{Theorem}
\theoremstyle{definition}
\newtheorem{defn}[theorem]{Definition}
\newtheorem{rem}[theorem]{Remark}
\def\R{{\mathbb{R}}}
\newcommand{\U}{\widehat{\mathcal{U}}}
\newcommand{\G}{\widehat{\mathcal{GL}}}
\begin{document}
\title[K-stability of continuous $C(X)$-algebras]{K-stability of continuous $C(X)$-algebras}
\author[Apurva Seth, Prahlad Vaidyanathan]{Apurva Seth, Prahlad Vaidyanathan}
\address{Department of Mathematics\\ Indian Institute of Science Education and Research Bhopal\\ Bhopal ByPass Road, Bhauri, Bhopal 462066\\ Madhya Pradesh. India.}
\email{apurva17@iiserb.ac.in, prahlad@iiserb.ac.in}
\date{}
\begin{abstract}
A C*-algebra is said to be K-stable if its nonstable K-groups are naturally isomorphic to the usual K-theory groups. We study continuous $C(X)$-algebras, each of whose fibers are K-stable. We show that such an algebra is itself K-stable under the assumption that the underlying space $X$ is compact, metrizable, and of finite covering dimension.
\end{abstract}
\subjclass[2010]{Primary 46L85; Secondary 46L80}
\keywords{Nonstable K-theory, C*-algebras}
\maketitle
\parindent 0pt

Nonstable K-theory is the study of the homotopy groups of the unitary group of a C*-algebra. The study of these groups was initiated by Rieffel \cite{rieffel2}, who showed that, for an irrational rotation algebra $A$, the inclusion map from $A$ to $M_n(A)$ induces an isomorphism between the corresponding homotopy groups. In other words, the nonstable $K$-groups are naturally isomorphic to the usual $K$-theory groups of the algebra. \\

The theory was further explored by Thomsen \cite{thomsen}, who used the notion of quasi-unitaries to profitably extend nonstable K-theory to encompass non-unital C*-algebras. Furthermore, he showed that this forms a homology theory, which allowed him to explicitly calculate these groups for certain C*-algebras. In particular, he showed that certain infinite dimensional C*-algebras (including the Cuntz algebras and simple infinite dimensional AF-algebras) satisfy the property enjoyed by the irrational rotation algebra mentioned above; a property he termed \emph{$K$-stability}. \\

Since then, it has been proved (See \cref{subsec:nonstable_k_theory}) that a variety of interesting simple C*-algebras are $K$-stable. The goal of this paper is to enlarge this class of C*-algebras to include non-simple C*-algebras. \\

By the Dauns-Hoffmann theorem (See \cite{nilsen} or \cite{rieffel3}), any non-simple C*-algebra may be represented as the section algebra of an upper semi-continuous C*-bundle over a compact space. If we assume that the underlying space $X$ is Hausdorff, then such an algebra carries a non-degenerate, central action of $C(X)$, and is called a $C(X)$-algebra. An interesting sub-class of $C(X)$-algebras are ones that come equipped with a natural continuity condition. These algebras, called \emph{continuous $C(X)$-algebras}, are particularly tractable as one can often take phenomena that occur at each fiber and propagate them to understand local behaviour of the algebra. Using a compactness argument, one may even be able to understand global behaviour. It is this idea that we employ in this paper to prove our main theorem.

\begin{mainthm}\label{thm:mainthm}
Let $X$ be a compact metric space of finite covering dimension, and let $A$ be a continuous $C(X)$-algebra. If each fiber of $A$ is $K$-stable, then $A$ is K-stable.
\end{mainthm}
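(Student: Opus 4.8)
The plan is to deduce \cref{thm:mainthm} from the formal machinery of nonstable $K$-theory set up in \cref{subsec:nonstable_k_theory}, a ``constant field'' lemma, and an induction on $\dim X$ organised around the pullback description of a $C(X)$-algebra over a closed cover.

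First I would isolate the closure properties of the class of $K$-stable C*-algebras that the rest of the proof uses. Since nonstable $K$-theory is homotopy invariant, commutes with sequential inductive limits, and carries a natural long exact sequence for an ideal that is compatible, via the comparison map, with the six-term sequence in topological $K$-theory, this class is closed under: (a) suspension; (b) finite direct sums; (c) sequential inductive limits; and (d) the two-out-of-three property in short exact sequences (apply the five lemma to the morphism of long exact sequences furnished by the comparison map). Property (d) yields the Mayer--Vietoris input I need: for a pullback $A = B \times_{E} C$ along surjections onto $E$ one has $\ker(A \to C) = \ker(B \to E)$ automatically, so if $C$, $E$ and $\ker(B\to E)$ are $K$-stable then $A$ is. Applied to a continuous $C(X)$-algebra $A$ and a cover $X = X_{1}\cup X_{2}$ by closed sets, where $A(X) = A(X_{1})\times_{A(X_{1}\cap X_{2})}A(X_{2})$ and every restriction $A(Z)$ to a closed set $Z\subseteq X$ is again a continuous $C(Z)$-algebra with the same fibres, this reduces \cref{thm:mainthm} to proving $K$-stability of $A(Z)$ for $Z$ ranging over small closed subsets of $X$.

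Next I would prove that $C(Y)\otimes D$ is $K$-stable whenever $D$ is $K$-stable and $Y$ is a compact metric space. Writing $Y$ as an inverse limit of finite CW complexes (nerves of finite open covers) and using (c) reduces this to $Y$ a finite CW complex; attaching one cell at a time produces short exact sequences $0 \to S^{k}D \to C(Y,D) \to C(Y',D) \to 0$ with ideal the $k$-fold suspension of $D$, which is $K$-stable by (a). An induction on the number of cells, with base case $Y$ a finite set where $C(Y,D)$ is a finite direct sum of copies of $D$, combined with (b) and (d), finishes this step. This is the only place where $K$-stability of the fibres is used directly, so the finite-dimensionality hypothesis of \cref{thm:mainthm} plays no role here; it will enter only in the patching below.

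Finally comes the main induction, on $d = \dim X$. For $\varepsilon>0$, choose a finite closed cover $X = F_{1}\cup\cdots\cup F_{m}$ of order $\le d+1$ and mesh $<\varepsilon$; iterating the pullback decomposition over the nerve of this cover, together with the Mayer--Vietoris reduction of the first step, expresses $K$-stability of $A(X)$ in terms of that of the algebras $A(Z)$ with $Z$ a nonempty intersection $F_{i_{0}}\cap\cdots\cap F_{i_{j}}$, each of diameter $<\varepsilon$. It remains to see that $A(Z)$ is $K$-stable once $\varepsilon$ is small enough, and here continuity of the field is essential, because a continuous $C(X)$-algebra need not be locally trivial: the idea is that as $\mathrm{diam}(Z)\to 0$ the restriction $A(Z)$ becomes uniformly close to the constant field over $Z$ with fibre $A_{x}$ ($x\in Z$), and one transports $K$-stability from $A_{x}$ to $A(Z)$ using the constant-field lemma (applied to $D = A_{x}$) together with a semiprojectivity / approximate-intertwining argument for quasi-unitaries. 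I expect this last point — propagating $K$-stability from the fibres out to an honest neighbourhood, and then controlling the finitely many (by finite covering dimension) Mayer--Vietoris gluings in the absence of local triviality — to be the heart of the proof; compactness, metrizability and $\dim X<\infty$ are all consumed here, metrizability entering through the mesh/order covers and the countability of the inductive limits.
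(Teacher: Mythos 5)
There are two genuine gaps in your plan, and each one sits exactly where the paper has to do its real work. First, the ``two-out-of-three'' property (d) is not available in the form you need it. Thomsen's long exact sequence for $0\to I\to A\to A/I\to 0$ terminates at $k_{-1}(A/I)=\pi_0(\U(A/I))$, and the final map $k_{-1}(A)\to k_{-1}(A/I)$ is not surjective in general (think of the Toeplitz extension: the generating unitary of $C(S^1)$ does not lift to an invertible). Consequently the five lemma lets you deduce $K$-stability of the \emph{ideal} from that of $A$ and $A/I$ (this is exactly how \cref{lem:tensor_commutative} handles the non-compact case), but it cannot deduce $K$-stability of the \emph{middle term} at the level of $k_{-1}$. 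Your Mayer--Vietoris reduction over $A=A(X_1)\times_{A(X_1\cap X_2)}A(X_2)$, and likewise your cell-attachment step $0\to S^kD\to C(Y,D)\to C(Y',D)\to 0$, both place the unknown algebra in the middle, so both collapse precisely at $\pi_0$. This is why the paper does not argue at the level of $k$-groups at all in the patching step: it reduces everything to the single statement that $\iota_*:\pi_0(\U(A))\to\pi_0(\U(M_2(A)))$ is bijective (via \cref{lem:suspension_adjoint} and \cref{lem:tensor_commutative}), and then glues actual quasi-unitaries and actual paths over the pullback of \cref{lem:cx_algebra_pullback}, using the Serre fibration property of \cref{thm:fibration} to lift paths and using injectivity of $(\iota_{W})_*$ on $\pi_1$ of the overlap $W$ (available by the dimension induction, since $\dim W\le\dim X-1$) to correct the mismatch of the two local trivializations on $W$. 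That $\pi_1$-correction is the heart of the proof and has no counterpart in your outline.

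Second, the local step as you describe it is false: a continuous $C(X)$-algebra need not be locally trivial, even approximately, and shrinking $\mathrm{diam}(Z)$ does not make $A(Z)$ close to the constant field with fibre $A(x)$ in any sense usable for an intertwining argument (already $\{f\in C([0,1],M_2): f(0)\ \text{diagonal}\}$ has a fibre of a different isomorphism class at $0$, and no neighbourhood of $0$ is approximately constant). What continuity of the field actually gives is much weaker and element-wise: $x\mapsto\|a(x)\|$ is continuous, so a relation or homotopy that holds exactly in the fibre $A(x)$ holds up to a small norm error on a closed neighbourhood, and the $2$-closeness criterion of \cref{lem:quasi_homotopy} together with \cref{lem:quasi_close} upgrades this to an honest homotopy of quasi-unitaries there (\cref{lem:local_injective,lem:local_path}). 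The fibre's $K$-stability is used only through the deformation retraction $\U(M_2(A(x)))\to\U(A(x))$ of \cref{lem:k_stable_def_retract}, applied to specific elements and paths which are then lifted through the fibration $\eta_x$ and propagated to a neighbourhood --- not through any structural approximation of $A(Z)$ by a trivial field. Your plan would need to be rebuilt around these pointwise propagation lemmas and the explicit $\pi_0$/$\pi_1$ gluing; the nerve-of-a-fine-cover organization could in principle replace the paper's boundary-dimension induction of \cref{rem:reorganize_cover}, but only after both mechanisms above are repaired.
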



\section{Preliminaries}

\subsection{Nonstable $K$-theory}\label{subsec:nonstable_k_theory}

We begin by reviewing the work of Thomsen of constructing the nonstable $K$-groups associated to a C*-algebra. For the proofs of all the facts mentioned below, the reader may refer to \cite{thomsen}. \\

Let $A$ be a $C^*$-algebra (not necessarily unital). Define an associative composition $\cdot$ on $A$ by
\begin{equation}\label{eqn:composition}
a\cdot b=a+b-ab
\end{equation}
Henceforth, if $a,b\in A$, then $ab$ will denote the usual multiplication in the algebra, and should not be confused with $a\cdot b$, which will denote the above composition. \\

An element $a\in A$ is said to be quasi-invertible if there exists $b\in A$ such that $a\cdot b = b\cdot a = 0$, and we write $\G(A)$ for the set of all quasi-invertible elements in $A$. An element $u\in A$ is said to be a quasi-unitary if $u\cdot u^{\ast} = u^{\ast}\cdot u = 0$, and we write $\U(A)$ for the set of all quasi-unitary elements in $A$. \\

If $B$ is a unital $C^*$-algebra, we write $GL(B)$ for the group of invertibles in $B$ and $U(B)$ for the group of unitaries in $B$. Let $A^+$ denote the unitization of $A$. It follows by \cite[Lemma 1.2]{thomsen} that an element $a\in A$ is quasi-invertible if and only if $1-a \in GL(A^+)$; and similarly, $u\in A$ is a quasi-unitary if and only if $(1-u) \in U(A^+)$. Therefore, $\G(A)$ is open in $A$, $\U(A)$ is closed in $A$, and they both form topological groups. Furthermore, the map $r: \G(A)\to \U(A)$ given by 
\[
r(a) := 1-(1-a)((1-a^*)(1-a))^{-1/2}
\]
is a strong deformation retract, and hence a homotopy equivalence. \\
%



For elements $u,v\in \U(A)$, we write $u\sim v$ if there is a continuous function $f:[0,1]\to \U(A)$ such that $f(0) = u$ and $f(1) = v$. We write $\U_0(A)$ for the set of $u\in \U(A)$ such that $u\sim 0$. The next result, which we will use repeatedly throughout the paper, follows from \cite[Theorem 1.9]{thomsen} and \cite[Theorem 4.8]{dold}.

\begin{theorem}\label{thm:fibration}
If $\varphi : A\to B$ is a surjective $\ast$-homomorphism between two C*-algebras, then the induced map $\varphi : \U_0(A)\to \U_0(B)$ is a Serre fibration.
\end{theorem}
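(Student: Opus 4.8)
The statement to establish is that $\varphi$ has the homotopy lifting property with respect to every cube $I^n$, which is exactly the definition of a Serre fibration. The plan is to proceed in two stages: first produce a continuous \emph{local} section of $\varphi$ near the basepoint $0 \in \U_0(B)$, which is the content of \cite[Theorem 1.9]{thomsen}; then use the group structure on quasi-unitaries to spread this into a local triviality over an open cover of $\U_0(B)$, and finally globalize by Dold's theorem \cite[Theorem 4.8]{dold}, whose hypotheses hold because the base is metrizable.

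For the local section, recall that $\U(A)$ is a topological group under the composition $\cdot$, that $\varphi \colon \U(A) \to \U(B)$ is a continuous homomorphism, and that $u \mapsto 1 - u$ identifies $\U(A)$ with $U(A^+)$. Thus, for $v \in \U_0(B)$ near $0$, the element $1 - v \in U(B^+)$ lies near $1$, so its logarithm $y = \log(1-v)$ is a skew-adjoint element of $B^+$ of small norm depending continuously on $v$. Since $\varphi^+ \colon A^+ \to B^+$ is a surjective $\ast$-homomorphism, hence an open (metric quotient) map, one may choose a continuous norm-controlled lift of $y$ and replace it by its skew-adjoint part to obtain $\tilde y \in A^+$ skew-adjoint with $\varphi^+(\tilde y) = y$ and $\tilde y \to 0$ as $v \to 0$. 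Then $s(v) := 1 - \exp(\tilde y)$ is a quasi-unitary in $\U_0(A)$ satisfying $\varphi(s(v)) = v$ and $s(0) = 0$, so $s$ is a continuous local section of $\varphi$ over a neighborhood $V$ of $0$.

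To globalize, note that the section gives a homeomorphism over $V$ exhibiting $\varphi^{-1}(V) \to V$ as a trivial fibration with fiber the kernel of $\varphi$, and translating by elements of the image yields the same local triviality over each set $g \cdot V$. The image of $\varphi$ is therefore an open, hence closed, subgroup of the connected group $\U_0(B)$, so it is all of $\U_0(B)$, and the sets $g \cdot V$ form an open cover of $\U_0(B)$ over each member of which $\varphi$ is a fibration. Because $\U_0(B)$ is a subspace of the normed space $B$ it is metrizable, hence paracompact, so this cover admits a subordinate partition of unity and is numerable. Dold's theorem \cite[Theorem 4.8]{dold} then upgrades a map that is a fibration over each member of a numerable cover to a genuine (Hurewicz, in particular Serre) fibration, which is the desired conclusion.

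The principal obstacle is the local lifting step, which the excerpt rightly attributes to \cite{thomsen}: one must choose the lift $\tilde y$ continuously in $v$ with controlled norm while preserving skew-adjointness, so that $\exp(\tilde y)$ remains unitary and $s$ genuinely lands in $\U_0(A)$; this rests on openness of the quotient map $\varphi^+$ together with the holomorphic functional calculus. Granting this, the remainder is bookkeeping: the translation argument and the verification of Dold's numerability hypothesis, the latter being immediate from metrizability of the base.
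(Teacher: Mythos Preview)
Your approach is correct and coincides with the paper's own treatment: the paper offers no proof beyond the remark that the result ``follows from \cite[Theorem 1.9]{thomsen} and \cite[Theorem 4.8]{dold}'', and your two-stage plan (Thomsen's local section near $0$, group translation to local triviality over a numerable cover, then Dold's local-to-global criterion on the metrizable base $\U_0(B)$) is exactly how those two citations fit together. The continuous lifting step you flag as the principal obstacle is indeed the content of Thomsen's result, so deferring it to that reference is appropriate.
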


%
%

For a C*-algebra $A$, the suspension of $A$ is defined to be $SA:=  C_0(\mathbb{R})\otimes A$. For $n>1$, we set $S^nA:= S(S^{n-1}A)$. We then have

\begin{lem}\label{lem:suspension_adjoint}\cite[Lemma 2.3]{thomsen}
For any C*-algebra $A$, $\pi_n(\U(A)) \cong \pi_0(\U(S^nA))$.
\end{lem}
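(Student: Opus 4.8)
The plan is to realise this isomorphism as concretely as possible, by identifying the topological group $\U(S^nA)$ with the $n$-fold based loop space of $\U(A)$, whereupon $\pi_0$ of the former becomes, essentially by definition, $\pi_n$ of the latter.

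First I would note that quasi-unitarity is a fibrewise condition. Iterating the definition of the suspension gives $S^nA \cong C_0(\R^n)\otimes A \cong C_0(\R^n,A)$, and on $C_0(\R^n,A)$ the composition $\cdot$, the involution, and the multiplication are all computed pointwise; hence $u\in C_0(\R^n,A)$ satisfies $u\cdot u^{\ast} = u^{\ast}\cdot u = 0$ if and only if $u(t)\in\U(A)$ for every $t\in\R^n$. Since $0$ lies in $\U(A)$ (indeed $0\cdot 0 = 0$) and $\U(A)$ is closed in $A$, this identifies $\U(S^nA)$ with the space of continuous functions $\R^n\to\U(A)$ that vanish at infinity, topologised by uniform convergence (the restriction of the C*-norm topology).

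Next I would pass to the one-point compactification: extending a function by the value $0$ at the point at infinity identifies $\U(S^nA)$, as a topological space, with $\mathrm{Map}_{\ast}\big((S^n,\infty),(\U(A),0)\big)$ carrying the compact-open topology — the two topologies agree because $S^n$ is compact and $\U(A)$ is metrizable. In other words $\U(S^nA)\cong\Omega^n\U(A)$, the $n$-fold loop space based at $0$. Taking $\pi_0$ and using the exponential law (a path $[0,1]\to\mathrm{Map}_{\ast}(S^n,\U(A))$ is exactly a basepoint-preserving homotopy $[0,1]\times S^n\to\U(A)$), one obtains
\[
\pi_0(\U(S^nA)) \;=\; \big[\,(S^n,\infty),(\U(A),0)\,\big]_{\ast} \;=\; \pi_n(\U(A)),
\]
the right-hand side being understood with basepoint $0$, as is customary in nonstable K-theory. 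A final check, which I would dispatch with an Eckmann--Hilton argument, is that the group structure on $\pi_0(\U(S^nA))$ coming from the pointwise composition $\cdot$ coincides with the usual group structure on $\pi_n(\U(A))$.

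The argument has no deep content — it is essentially bookkeeping — but the step that genuinely needs care is the comparison of topologies: one must be sure that the C*-norm topology on $\U(S^nA)$ restricts to the compact-open topology on the associated mapping space, so that continuous paths in $\U(S^nA)$ and based homotopies of maps $S^n\to\U(A)$ really are the same data. Compactness of the sphere together with metrizability of $\U(A)$ is precisely what makes this comparison go through.
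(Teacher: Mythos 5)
Your argument is correct and is essentially the standard one: the paper itself offers no proof but defers to Thomsen's Lemma 2.3, whose argument likewise rests on the pointwise nature of quasi-unitarity to identify $\U(S^nA)$ with the based $n$-fold loop space of $\U(A)$ at the basepoint $0$. Your attention to the comparison of the sup-norm and compact-open topologies, and to reconciling the two group structures via Eckmann--Hilton, covers the only points where care is needed.
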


\begin{defn}
The nonstable $K$-groups of a C*-algebra $A$ are defined as
\[
k_n(A):= \pi_{n+1}(\U(A)), \qquad\text{ for } n = -1, 0,\ldots
\]
\end{defn}

By \cref{lem:suspension_adjoint} and Bott periodicity, it follows \cite[Proposition 2.6]{thomsen} that, if $A$ is a stable C*-algebra, then $k_n(A)\cong K_n(A)$, where $K_n(A)$ denotes the usual $K$-theory groups of $A$. Motivated by this, Thomsen defines the notion of $K$-stability of a C*-algebra.

\begin{defn}
Let $A$ be a $C^*$-algebra and $m\geq 2$. Define $\iota_m: M_{m-1}(A)\to M_m(A)$ by
\[
a\mapsto\begin{pmatrix}
a&0\\
0&0
\end{pmatrix}
\]
We say that $A$ is $K$-stable if $(\iota_m)_{\ast}: k_n(M_{m-1}(A))\to k_n(M_m(A))$ is an isomorphism for all $n=-1,0,1,2\hdots$ and all $m=2,3,4\hdots$.
\end{defn}

\begin{rem}\label{rem:k_stable_examples}
The following C*-algebras are known to be $K$-stable:
\begin{itemize}
\item If $\mathcal{Z}$ denotes the Jiang-Su algebra, then $A\otimes \mathcal{Z}$ is $K$-stable for any C*-algebra $A$ \cite{jiang}. In particular, every separable, approximately divisible C*-algebra is $K$-stable \cite{toms_winter}.
\item Every irrational rotation algebra is $K$-stable \cite{rieffel2}.
\item If $\mathcal{O}_n$ denotes the Cuntz algebra, then $A\otimes \mathcal{O}_n$ is $K$-stable for any C*-algebra $A$ \cite{thomsen}.
\item If $A$ is an infinite dimensional simple AF-algebra, then $A\otimes B$ is $K$-stable for any C*-algebra $B$ \cite{thomsen} .
\item If $A$ is a purely infinite, simple C*-algebra, and $p$ any non-zero projection of $A$, then $pAp$ is $K$-stable \cite{zhang}.
\end{itemize}
Note that some of the examples mentioned above may be subsumed into the first example as they are known to absorb $\mathcal{Z}$ tensorially. However, it is worth mentioning that the original proofs of $K$-stability for these algebras does not use $\mathcal{Z}$-stability. Furthermore, in the case where the algebras are $\mathcal{Z}$-stable, \cref{thm:mainthm} may be deduced from \cite[Theorem 4.6]{hirshberg_et_al}.
\end{rem}


We conclude this section with an important observation about $K$-stable C*-algebras. 

\begin{lem}\label{lem:k_stable_def_retract}
If $A$ is $K$-stable, then for any $m\geq 2, \iota_m(\U(M_{m-1}(A))$ is a strong deformation retract of $\U(M_m(A))$.
\end{lem}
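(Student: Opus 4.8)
The plan is to exhibit $\iota_m\colon \U(M_{m-1}(A)) \to \U(M_m(A))$ as a closed cofibration which is moreover a homotopy equivalence; the classical fact that a cofibration which happens to be a homotopy equivalence realizes its source as a strong deformation retract of its target then gives the conclusion.

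First I would record that for \emph{any} C*-algebra $B$ the space $\U(B)$ is a metrizable ANR: it is a subspace of the Banach space $B$, hence metrizable; the set $\G(B)$ of quasi-invertibles is open in $B$, hence an ANR; and the retraction $r\colon \G(B)\to \U(B)$ introduced above exhibits $\U(B)$ as a retract of the ANR $\G(B)$, hence as an ANR itself. Next, since $\iota_m$ is an injective $\ast$-homomorphism it is isometric with closed range, and it preserves the composition $\cdot$; therefore it restricts to a closed embedding $\U(M_{m-1}(A))\hookrightarrow \U(M_m(A))$, with image $\iota_m(M_{m-1}(A))\cap \U(M_m(A))$, and this embedding is a homomorphism of topological groups.

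The crux is to deduce from $K$-stability that $\iota_m$ is a weak homotopy equivalence. Unwinding the definition of the nonstable $K$-groups, $K$-stability of $A$ says precisely that $(\iota_m)_\ast\colon \pi_j\bigl(\U(M_{m-1}(A)),0\bigr)\to \pi_j\bigl(\U(M_m(A)),0\bigr)$ is bijective for $j=0$ and an isomorphism for every $j\geq 1$, where $0$ is the identity element of the relevant quasi-unitary group. To handle an arbitrary basepoint $u\in \U(M_{m-1}(A))$, I would use that left translation $x\mapsto u\cdot x$ is a self-homeomorphism of $\U(M_{m-1}(A))$ sending $0$ to $u$, that the analogous translation is a self-homeomorphism of $\U(M_m(A))$ sending $0$ to $\iota_m(u)$, and that $\iota_m$ intertwines the two (being a group homomorphism); thus $(\iota_m)_\ast$ at the basepoint $u$ is conjugate, via these homeomorphisms, to $(\iota_m)_\ast$ at the basepoint $0$, and hence is again an isomorphism. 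Since $(\iota_m)_\ast$ is a bijection on path components, every path component of $\U(M_m(A))$ meets the image of $\iota_m$, so the above covers all basepoints and $\iota_m$ is a weak homotopy equivalence. Finally, $\U(M_{m-1}(A))$ and $\U(M_m(A))$ are metrizable ANRs, hence have the homotopy type of CW complexes by Milnor's theorem, so Whitehead's theorem promotes $\iota_m$ to a genuine homotopy equivalence.

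It then remains to invoke ANR theory: $\iota_m(\U(M_{m-1}(A)))$ is a closed subspace of the metrizable ANR $\U(M_m(A))$ and is itself an ANR (being homeomorphic to $\U(M_{m-1}(A))$), so the inclusion is a cofibration, and, being also a homotopy equivalence, it presents $\iota_m(\U(M_{m-1}(A)))$ as a strong deformation retract of $\U(M_m(A))$. I expect the only real subtlety to be the basepoint bookkeeping in the previous step: $K$-stability is stated only for the distinguished basepoint $0$, and it is exactly the topological-group structure on the quasi-unitary groups that lets one transport the isomorphism to the remaining path components. The other ingredients --- that open subsets and retracts of ANRs are ANRs, that a closed ANR subspace of a metrizable ANR is a cofibration, and that a cofibration which is a homotopy equivalence is a strong deformation retract --- are standard facts from ANR theory and elementary homotopy theory.
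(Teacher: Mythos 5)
Your proof is correct and follows essentially the same route as the paper: upgrade the weak homotopy equivalence coming from $K$-stability to a genuine homotopy equivalence via CW homotopy type and Whitehead's theorem, verify the inclusion is a cofibration using ANR theory, and conclude with the standard fact that a cofibration which is a homotopy equivalence exhibits a strong deformation retract. The only (harmless) differences are that the paper runs the argument on the quasi-invertibles $\G$ and transfers to $\U$ at the end via the commuting retractions, whereas you work with $\U$ directly as a retract of the ANR $\G$, and you spell out the basepoint bookkeeping via the topological-group structure, which the paper leaves implicit.
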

\begin{proof}
Note that $\G(A)$ is an absolute neighbourhood retract \cite[Theorem 5]{palais}, and therefore, the pair $(\G(M_m(A)), \iota_m(\G(M_{m-1}(A))))$ has the homotopy extension property with respect to all spaces \cite[Theorem 7]{palais}. If $A$ is $K$-stable, then $\iota_m: \G(M_{m-1}(A))\to \G(M_m(A))$ is a weak homotopy equivalence. However, since $\G(A)$ is an open subset of a normed linear space, $\G(A)$ has the homotopy type of a CW-complex \cite[Chapter IV, Corollary 5.5]{lundell}. By Whitehead's theorem, it follows that $\iota_m$ is a homotopy equivalence, so $\iota_m(\G(M_{m-1}(A))$ is a strong deformation retract of $\G(M_m(A))$ by \cite[Theorem 0.20]{hatcher}. Since the retractions $r : \G(M_k(A))\to \U(M_k(A))$ commute with the inclusion map $\iota_k$, we conclude that $\iota_m(\U(M_{m-1}(A))$ is a strong deformation retract of $\U(M_m(A))$.
\end{proof}

\subsection{$C(X)$-algebras}
Let $A$ be a C*-algebra, and $X$ a compact Hausdorff space. We say that $A$ is a $C(X)$-algebra \cite[Definition 1.5]{kasparov} if there is a unital $\ast$-homomorphism $\theta : C(X)\to ZM(A)$, where $ZM(A)$ denotes the center of the multiplier algebra of $A$. \\


If $Y\subset X$ is closed, the set $C_0(X,Y)$ of functions in $C(X)$ that vanish on $Y$ is a closed ideal of $C(X)$. By the Cohen factorization theorem \cite[Theorem 4.6.4]{brown_ozawa}, $C_0(X,Y)A$ is a closed, two-sided ideal of $A$. The quotient of $A$ by this ideal is denoted by $A(Y)$, and we write $\pi_Y : A\to A(Y)$ for the quotient map (also referred to as the restriction map). If $Z\subset Y$ is a closed subset of $Y$, we write $\pi^Y_Z : A(Y)\to A(Z)$ for the natural restriction map, so that $\pi_Z = \pi^Y_Z\circ \pi_Y$. If $Y = \{x\}$ is a singleton, we write $A(x)$ for $A(\{x\})$ and $\pi_x$ for $\pi_{\{x\}}$. The algebra $A(x)$ is called the fiber of $A$ at $x$. For $a\in A$, write $a(x)$ for $\pi_x(a)$. For each $a\in A$, we have a map $\Gamma_a : X\to \R$ given by $x \mapsto \|a(x)\|$. We say that $A$ is a continuous $C(X)$-algebra if $\Gamma_a$ is continuous for each $a\in A$. \\

If $A$ is a continuous $C(X)$-algebra, we will often have reason to consider other $C(X)$-algebras obtained from $A$. At that time, the following result of Kirchberg and Wasserman will be useful.

\begin{theorem}\cite[Remark 2.6]{kw_fields}\label{thm:kw_tensor}
Let $X$ be a compact Hausdorff space, and let $A$ be a continuous $C(X)$-algebra. If $B$ is a nuclear C*-algebra, then $A\otimes B$ is a continuous $C(X)$-algebra whose fiber at a point $x\in X$ is $A(x)\otimes B$.
\end{theorem}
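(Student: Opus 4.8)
The plan is to check, in turn, that $A\otimes B$ carries a $C(X)$-algebra structure, that its fiber at a point $x$ is $A(x)\otimes B$, and --- the real content --- that the resulting field is continuous.

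The first point is formal. I would compose $\theta\colon C(X)\to ZM(A)$ with the canonical unital embedding $m\mapsto m\otimes 1$ of $ZM(A)$ into $ZM(A\otimes B)$; this is legitimate because any multiplier of $A\otimes B$ that commutes with every element of $A\otimes B$ automatically lies in $ZM(A\otimes B)$, and $m\otimes 1$ visibly commutes with all elementary tensors. This yields a unital $\ast$-homomorphism $\theta'\colon C(X)\to ZM(A\otimes B)$ under which $\theta'(f)$ acts on the fiber over $y$ as multiplication by $f(y)$. For the fibers, write $I_x:=\ker\pi_x=C_0(X,\{x\})A$. I would first verify, using the Cohen factorization theorem and the density of elementary tensors, that $C_0(X,\{x\})(A\otimes B)=I_x\otimes B$ as ideals of $A\otimes B$. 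Since $B$ is nuclear it is exact, so tensoring the exact sequence $0\to I_x\to A\to A(x)\to 0$ with $B$ stays exact, giving
\[
(A\otimes B)(x)=(A\otimes B)\big/(I_x\otimes B)\cong A(x)\otimes B ,
\]
with the restriction map $A\otimes B\to A(x)\otimes B$ identified with $\pi_x\otimes\mathrm{id}_B$. (Equivalently, one could invoke that the minimal and maximal tensor norms agree for nuclear $B$ and that the maximal tensor product always respects quotients.) The same reasoning --- now just the exactness of $M_k$ --- identifies $(A\otimes M_k)(x)$ with $A(x)\otimes M_k=M_k(A(x))$ for each $k$.

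The substantive step is continuity: that $y\mapsto\|c(y)\|$ be continuous for every $c\in A\otimes B$. Upper semi-continuity holds for any $C(X)$-algebra and needs no hypothesis on $B$, so only lower semi-continuity is in question; and since the set of $c$ for which $y\mapsto\|c(y)\|$ is continuous is norm-closed, it suffices to treat $c=\sum_{i=1}^{n}a_i\otimes b_i$. Here I would use nuclearity a second time, through the completely positive approximation property: given $\varepsilon>0$, choose contractive, completely positive maps $\phi\colon B\to M_k$ and $\psi\colon M_k\to B$ with $\|\psi\phi(b_i)-b_i\|<\varepsilon$ for each $i$, and put $c':=\sum_i a_i\otimes\phi(b_i)\in A\otimes M_k=M_k(A)$. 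Tensoring $\phi$ and $\psi$ with the identity of $A$ --- and fiberwise with the identity of each $A(y)$, compatibly with the restriction maps by the fiber identifications above --- a routine comparison, using that these amplified maps are contractive, would give an estimate
\[
\bigl|\,\|c(y)\|-\|c'(y)\|\,\bigr|\le C_c\,\varepsilon\qquad\text{for all }y\in X ,
\]
with $C_c$ depending only on $c$. It then remains to invoke that $M_k(A)$ is a continuous $C(X)$-algebra with fibers $M_k(A(y))$ whenever $A$ is: the fibers were identified above, and continuity of $y\mapsto\|c'(y)\|$ follows by expressing this norm as a supremum --- over a family of tuples of elements of $A$ not depending on $y$ --- of continuous functions $y\mapsto\|d(y)\|$ with $d\in A$. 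Letting $\varepsilon\to 0$ then presents $y\mapsto\|c(y)\|$ as a uniform limit of continuous functions, hence continuous.

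I expect the continuity statement to be the main obstacle, and more precisely the point that fiberwise norms in a $C(X)$-algebra are automatically upper but not lower semi-continuous. Nuclearity of $B$ is exactly what repairs this: first by forcing the fibers to be $A(x)\otimes B$ (via exactness), and then by allowing $B$ to be replaced, to within arbitrarily small error, by the finite-dimensional --- and therefore transparently continuous --- coefficient algebra $M_k$. If $B$ were assumed only exact, the same scheme would still give the statement for the minimal tensor product; a hypothesis of this kind is genuinely needed, since the conclusion can fail for non-exact $B$.
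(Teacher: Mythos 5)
The paper does not prove this statement at all: it is quoted verbatim from Kirchberg and Wassermann (Remark~2.6 of their paper on exact and continuous fields), so there is no internal argument to compare yours against. That said, your reconstruction is correct and is essentially the standard proof of that remark. The two uses you make of nuclearity are exactly the right ones: exactness of $B$ to identify the fibers $(A\otimes B)(x)\cong A(x)\otimes B$ (via $C_0(X,\{x\})(A\otimes B)=I_x\otimes B$ and exactness of $-\otimes_{\min}B$ on $0\to I_x\to A\to A(x)\to 0$), and the completely positive approximation property to reduce lower semi-continuity of $y\mapsto\|c(y)\|$ to the matrix-coefficient case, with the two-sided estimate $\|c'(y)\|\le\|c(y)\|\le\|c'(y)\|+\varepsilon\sum_i\|a_i\|$ coming from contractivity of $\mathrm{id}\otimes\phi$ and $\mathrm{id}\otimes\psi$ on minimal tensor products. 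Two small points deserve to be made explicit. First, the continuity of $y\mapsto\|c'(y)\|$ for $c'\in M_k(A)$: your ``supremum over a $y$-independent family'' should be spelled out as $\|c'(y)\|=\sup\|(\tilde w^*c'\tilde v)(y)\|$ over columns $\tilde v,\tilde w$ over $A$ of norm at most one, where the fact that this supremum is attained up to $\varepsilon$ at each fixed $y_0$ uses norm-preserving lifting of columns through the quotient $M_k(A)\to M_k(A(y_0))$; this gives lower semi-continuity, which combines with the automatic upper semi-continuity. Second, one should check that the fiber identifications intertwine $\mathrm{id}_A\otimes\phi$ with $\mathrm{id}_{A(y)}\otimes\phi$, which you note and which is immediate on elementary tensors. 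Your closing remark is also accurate: Kirchberg--Wassermann show that exactness of $B$ is precisely what is needed for this conclusion over all continuous fields, so some such hypothesis is unavoidable.
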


Finally, one fact that plays a crucial role in our investigation is that a $C(X)$-algebra may be patched together from quotients in the following way: Let $B,C,$ and $D$ be C*-algebras, and $\delta : B\to D$ and $\gamma : C\to D$ be $\ast$-homomorphisms. We define the pullback of this system to be
\[
A = B\oplus_D C := \{(b,c) \in B\oplus C : \delta(b) = \gamma(c)\}
\]
This is describe by a diagram
\begin{equation*}
\xymatrix{
A\ar[r]^{\phi}\ar[d]_{\psi} & B\ar[d]^{\delta} \\
C\ar[r]^{\gamma} & D
}
\end{equation*}
where $\phi(b,c) = b$ and $\psi(b,c) = c$.

\begin{lem}\cite[Lemma 2.4]{mdd_finite}\label{lem:cx_algebra_pullback}
Let $X$ be a compact Hausdorff space and $Y$ and $Z$ be two closed subsets of $X$ such that $X = Y\cup Z$. If $A$ is a $C(X)$-algebra, then $A$ is isomorphic to the pullback
\[
\xymatrix{
A\ar[r]^{\pi_Y}\ar[d]_{\pi_Z} & A(Y)\ar[d]^{\pi^Y_{Y\cap Z}} \\
A(Z)\ar[r]^{\pi^Z_{Y\cap Z}} & A(Y\cap Z)
}
\]
\end{lem}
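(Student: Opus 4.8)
The plan is to write down the obvious comparison map and verify it is an isomorphism by identifying kernels fiberwise. Define a $\ast$-homomorphism
\[
\Phi\colon A \longrightarrow A(Y)\oplus_{A(Y\cap Z)} A(Z), \qquad \Phi(a) = (\pi_Y(a),\pi_Z(a)) .
\]
This is well defined (lands in the pullback) because $\pi^Y_{Y\cap Z}\circ\pi_Y = \pi_{Y\cap Z} = \pi^Z_{Y\cap Z}\circ\pi_Z$. Writing $I_Y := \ker\pi_Y = C_0(X,Y)A$ and $I_Z := \ker\pi_Z = C_0(X,Z)A$, the entire proof reduces to two statements about these closed ideals: first $I_Y\cap I_Z = 0$, which gives injectivity of $\Phi$; and second $I_Y + I_Z = C_0(X,Y\cap Z)A = \ker\pi_{Y\cap Z}$, which, together with surjectivity of $\pi_Y$ and $\pi_Z$, gives surjectivity of $\Phi$.

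For injectivity, I would use the standard fact that for closed two-sided ideals in a C*-algebra one has $I_Y\cap I_Z = \overline{I_Y I_Z}$ (approximate-unit argument). Since $C(X)$ acts centrally on $A$, $I_Y I_Z = C_0(X,Y)A\,C_0(X,Z)A = C_0(X,Y)C_0(X,Z)A^2$, and the product $C_0(X,Y)C_0(X,Z)$ is zero since a function vanishing on $Y$ times one vanishing on $Z$ vanishes on $Y\cup Z = X$. Hence $I_Y\cap I_Z = 0$.

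For the identification of $I_Y+I_Z$, the crux is the purely topological identity $C_0(X,Y) + C_0(X,Z) = C_0(X,Y\cap Z)$. The inclusion "$\subseteq$" is immediate; for "$\supseteq$", given $f$ vanishing on $Y\cap Z$, define $g$ on $X = Y\cup Z$ to be $0$ on $Y$ and $f$ on $Z$. This is well defined and continuous by the pasting lemma, since $Y$ and $Z$ are closed and $f|_{Y\cap Z}=0$, so $f = g + (f-g)$ with $g\in C_0(X,Y)$ and $f-g\in C_0(X,Z)$. Multiplying through by $A$ and invoking the fact that $I_Y + I_Z$ is already closed (a sum of two closed ideals in a C*-algebra is closed), this upgrades to $I_Y + I_Z = C_0(X,Y\cap Z)A = \ker\pi_{Y\cap Z}$.

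Finally, surjectivity of $\Phi$: given $(b,c)$ in the pullback, lift $b$ to some $a_1\in A$ with $\pi_Y(a_1)=b$ and $c$ to some $a_2\in A$ with $\pi_Z(a_2)=c$; applying $\pi_{Y\cap Z}$ and using the compatibility condition $\pi^Y_{Y\cap Z}(b)=\pi^Z_{Y\cap Z}(c)$ shows $a_1 - a_2\in\ker\pi_{Y\cap Z} = I_Y + I_Z$, so write $a_1 - a_2 = d_Y + d_Z$ with $d_Y\in I_Y$, $d_Z\in I_Z$, and set $a := a_1 - d_Y = a_2 + d_Z$, which satisfies $\pi_Y(a)=b$ and $\pi_Z(a)=c$. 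The only mildly delicate point I anticipate is the bookkeeping with closed ideals — in particular checking that $C_0(X,Y\cap Z)A$ equals $I_Y+I_Z$ exactly rather than merely its closure — which is precisely what the closedness of a sum of two closed ideals takes care of.
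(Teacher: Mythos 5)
The paper does not prove this lemma at all --- it is quoted verbatim from \cite[Lemma 2.4]{mdd_finite} --- so there is no in-text argument to compare against; what you have written is essentially the standard proof of that cited result, and it is correct and complete. The two pillars are exactly right: $I_Y\cap I_Z=\overline{I_YI_Z}=0$ because centrality of the $C(X)$-action lets you push the functions together and $C_0(X,Y)\,C_0(X,Z)=0$ when $Y\cup Z=X$; and $I_Y+I_Z=C_0(X,Y\cap Z)A=\ker\pi_{Y\cap Z}$ via the pasting-lemma decomposition of a function vanishing on $Y\cap Z$, with closedness of a sum of two closed ideals (plus Cohen factorization for $C_0(X,Y\cap Z)A$) handling the closure bookkeeping you flagged. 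Two small remarks: injectivity can be had even more cheaply from the fiberwise norm formula $\|a\|=\sup_{x\in X}\|\pi_x(a)\|$ valid in any $C(X)$-algebra, since $\pi_Y(a)=\pi_Z(a)=0$ forces $\pi_x(a)=0$ for every $x\in Y\cup Z=X$; and in the surjectivity step it is worth noting explicitly that the lifts $a_1,a_2$ exist because $\pi_Y$ and $\pi_Z$ are quotient maps, hence surjective by construction. Neither point is a gap.
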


\subsection{Notational Conventions} We fix some notational conventions we will use repeatedly: If $A$ is a C*-algebra, we write $\iota_A : A\to M_2(A)$ for the natural inclusion map. When there is no ambiguity, we write $\iota$ for this map. Moreover, if $\varphi:A\to B$ is a $\ast$-homomorphism between two C*-algebras, then the induced map from $\U(A)$ to $\U(B)$ is also denoted by $\varphi$. \\

If $A$ is a continuous $C(X)$-algebra, then $M_2(A)$ is also a continuous $C(X)$-algebra with fibers $M_2(A(x))$ by \cref{thm:kw_tensor}. We will often consider both simultaneously, so we fix the following convention: If $Y\subset X$ is a closed set, we denote the restriction map by $\eta_Y : M_2(A)\to M_2(A(Y))$, and write $\iota_Y : A(Y)\to M_2(A(Y))$ for the natural inclusion map. If $Y = X$, we simply write $\iota$ for $\iota_X$. Note that $\eta_Y\circ \iota = \iota_Y\circ \pi_Y$. Once again, if $Y = \{x\}$, we simply write $\iota_x$ for $\iota_{\{x\}}$. \\


Finally, suppose $f$ and $g$ are two continuous paths in a topological space $Y$. If $f(1) = g(0)$, we write $f\bullet g$ for the concatenation of the two paths. If $f$ and $g$ agree at end-points, we write $f\sim_h g$ if there is a path homotopy between them. Furthermore, we write $\overline{f}$ for the path $\overline{f}(t) := f(1-t)$. If $Y = \U(A)$ for some C*-algebra $A$, we write $f\cdot g$ for the path $t\mapsto f(t)\cdot g(t)$, and we write $f^{\ast}$ for the path $t\mapsto f(t)^{\ast}$.

\section{Main Results}


\begin{lem}\label{lem:tensor_commutative}
Let $A$ be a $K$-stable C*-algebra and $X$ a locally compact Hausdorff space, then $C_0(X)\otimes A$ is $K$-stable.
\end{lem}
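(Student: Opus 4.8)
The plan is to reduce the statement to known facts about $K$-stability by exploiting the behaviour of nonstable $K$-groups under the relevant constructions. First I would recall that $K$-stability of a C*-algebra $B$ means precisely that $(\iota_m)_*: k_n(M_{m-1}(B)) \to k_n(M_m(B))$ is an isomorphism for all $n \geq -1$ and all $m \geq 2$; equivalently, that $\iota_B : \U(M_{m-1}(B)) \to \U(M_m(B))$ is a weak homotopy equivalence for each $m$, since $k_n(B) = \pi_{n+1}(\U(B))$. So the task is to show $\iota_{C_0(X)\otimes A} : \U(M_{m-1}(C_0(X)\otimes A)) \to \U(M_m(C_0(X)\otimes A))$ is a weak homotopy equivalence, given the corresponding statement for $A$.

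The key observation is that $M_k(C_0(X)\otimes A) \cong C_0(X)\otimes M_k(A) \cong C_0(X, M_k(A))$, and the inclusion $\iota_{C_0(X)\otimes A}$ is, under this identification, the map induced pointwise by $\iota_A : M_{m-1}(A) \to M_m(A)$. Thus I would try to show that for any C*-algebra $B$, there is a natural identification (or at least a natural isomorphism on homotopy groups) between $\pi_j\big(\U(C_0(X)\otimes B)\big)$ and something built functorially out of $\pi_*(\U(B))$ — the cleanest route is to use \cref{lem:suspension_adjoint} together with the fact that nonstable $K$-theory is a homotopy-invariant, half-exact functor (Thomsen's homology theory). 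Concretely, writing $k_n$ for the nonstable $K$-functor, I would establish that $C_0(X) \otimes (-)$ interacts with $k_n$ in a way compatible with the inclusion maps $\iota$. The most transparent approach: reduce to the case $X = \R^d$ (or a point) by a CW/mapping-telescope or Mayer–Vietoris argument is heavier than needed; instead observe $C_0(X)\otimes SB = S(C_0(X)\otimes B)$, so by \cref{lem:suspension_adjoint} it suffices to handle $\pi_0$, i.e.\ to show $\iota : \U(C_0(X)\otimes M_{m-1}(A)) \to \U(C_0(X)\otimes M_m(A))$ induces a bijection on $\pi_0$ and, after suspending, on all $\pi_j$ — but suspending $C_0(X)\otimes B$ again just changes $X$ to $X \times \R$, which need not simplify.

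A cleaner line: since $A$ is $K$-stable, \cref{lem:k_stable_def_retract} gives that $\iota_m(\U(M_{m-1}(A)))$ is a \emph{strong deformation retract} of $\U(M_m(A))$; let $H : \U(M_m(A)) \times [0,1] \to \U(M_m(A))$ be the deformation retraction. I would then define a deformation retraction of $\U(C_0(X) \otimes M_m(A)) = \U(C_0(X, M_m(A)))$ onto $\U(C_0(X, M_{m-1}(A)))$ by applying $H$ pointwise: given $u \in C_0(X, M_m(A))$, set $\widetilde{H}(u,t)(x) := H(u(x), t)$. The points to check are that $\widetilde{H}(u,t)$ again lies in $C_0(X, M_m(A))$ (continuity in $x$ follows from continuity of $H$ and of $u$; vanishing at infinity follows since $H$ fixes $0$, so $H(u(x),t) \to 0$ as $x \to \infty$ uniformly using compactness of $[0,1]$), that $\widetilde{H}$ is jointly continuous (uniform continuity of $H$ on the relevant compact sets, together with the sup-norm topology on $C_0(X,M_m(A))$), and that it restricts to the identity on $\U(C_0(X,M_{m-1}(A)))$ and has image in that subspace at $t = 1$ — all of which are immediate from the corresponding properties of $H$. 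It follows that the inclusion $\U(C_0(X,M_{m-1}(A))) \hookrightarrow \U(C_0(X,M_m(A)))$ is a homotopy equivalence, hence induces isomorphisms on all homotopy groups, i.e.\ $(\iota_m)_* : k_n(M_{m-1}(C_0(X)\otimes A)) \to k_n(M_m(C_0(X)\otimes A))$ is an isomorphism for all $n$ and $m$. Therefore $C_0(X)\otimes A$ is $K$-stable.

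The main obstacle I anticipate is the continuity bookkeeping for $\widetilde{H}$: one must argue that applying the deformation retraction $H$ pointwise to a norm-continuous function of $x$ yields a norm-continuous function, and that the resulting homotopy is jointly continuous in the sup norm. This is where one genuinely uses that $H$ is (locally) uniformly continuous — a priori $H$ is only continuous on $\U(M_m(A)) \times [0,1]$, and $\U(M_m(A))$ is not compact, so one should first note that the image of a fixed $u \in C_0(X, M_m(A))$ is contained in a compact subset of $\U(M_m(A))$ (namely $\{u(x) : x \in X\} \cup \{0\}$, using that $u$ vanishes at infinity), restrict $H$ to a neighbourhood of that compact set, and invoke uniform continuity there; a short $\varepsilon$–$\delta$ argument then gives sup-norm continuity of $\widetilde H$. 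Everything else is formal.
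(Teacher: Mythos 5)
Your argument is correct, and for compact $X$ it is essentially the paper's proof: both take the strong deformation retraction $H$ of $\U(M_m(A))$ onto $\iota_m(\U(M_{m-1}(A)))$ supplied by \cref{lem:k_stable_def_retract}, apply it pointwise under the identification $\U(M_m(C_0(X)\otimes A)) \cong$ (suitable functions into $\U(M_m(A))$), and check continuity via compactness of $[0,1]\times \overline{u(X)}$. The difference is in how the non-compact case is handled. The paper first proves the compact case and then deduces the general case by passing to the one-point compactification: the short exact sequence $0\to C_0(X)\otimes A\to C(X^+)\otimes A\to A\to 0$ induces a long exact sequence of nonstable $K$-groups (\cite[Theorem 2.5]{thomsen}), and the five lemma finishes the job. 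You instead run the pointwise deformation retraction directly on $C_0(X, M_m(A))$, which requires two extra observations that you correctly identify: $H(\cdot,t)$ fixes $0$ for all $t$ (since $0$ lies in the retract), and the resulting function vanishes at infinity uniformly in $t$ (a tube-lemma argument at $(0,t)$, $t\in[0,1]$). Your route buys a self-contained proof that avoids invoking the long exact sequence and the five lemma, at the cost of slightly more delicate continuity bookkeeping; the paper's route outsources the non-compact case to homological machinery already available in \cite{thomsen}. One small imprecision in your write-up: "uniform continuity on a neighbourhood of the compact set" is not literally available, since in an infinite-dimensional Banach space a compact set has no compact neighbourhood; the correct statement is the relative one — for every $\epsilon>0$ there is $\delta>0$ such that $\|F(s,v)-F(s',v')\|<\epsilon$ whenever $v'\in K$, $\|v-v'\|<\delta$, $|s-s'|<\delta$ — which follows from continuity of $F$ and compactness of $[0,1]\times K$ by a standard subsequence argument. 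This is the same level of detail the paper elides with "it is easy to see", so it is not a gap.
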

\begin{proof}
Suppose first that $X$ is compact. For simplicity of notation, we write $B$ for $C(X)\otimes A$. By \cref{lem:k_stable_def_retract}, for each $m\geq 2$, there is a retraction $r: \U(M_m(A)) \to \U(M_{m-1}(A))$ and a homotopy $F:[0,1]\times \U(M_m(A)) \to \U(M_m(A))$ such that $F(0,v) = v$ and $F(1,v)= r(v)$ for all $v\in \U(M_m(A))$. Clearly, we may identify $\U(M_m(B))$ with $C(X,\U(M_m(A))$, where the latter denotes the space of continuous functions from $X$ to $\U(M_m(A))$, equipped with the uniform topology (which coincides with the compact-open topology). Therefore, we may define $\widetilde{r} : \U(M_m(B)) \to \U(M_{m-1}(B))$ by
\[
\widetilde{r}(f)(x) := r(f(x))
\]
and $\widetilde{F} : [0,1]\times \U(M_m(B))\to \U(M_m(B))$ by
\[
\widetilde{F}(s,f)(x) := F(s,f(x))
\]
It is easy to see that $\widetilde{r}$ is a retraction, and that $\widetilde{F}$ is continuous, and implements a homotopy between $\iota_{M_{m-1}(B)}\circ \widetilde{r}$ and $\text{id}_{\U(M_m(B))}$. Thus, $B$ is $K$-stable. \\

If $X$ is not compact, let $X^+$ denote its one-point compactification. We now have a short exact sequence $0\to C_0(X)\otimes A\to C(X^+)\otimes A\to A \to 0$, which induces a long exact sequence of $k$-groups by \cite[Theorem 2.5]{thomsen}. By the first part of the argument, $C(X^+)\otimes A$ is $K$-stable, so the result follows from the five lemma.
\end{proof}

Let $A$ be a C*-algebra, and $c\in A$ a self-adjoint element. We define
\[
\Lambda(c) := - \sum_{n=1}^{\infty} \frac{(ic)^n}{n!}
\]
Observe that, in $A^+, \Lambda(c) = 1 - \exp(ic)$, so that $\Lambda(c) \sim 0$ in $\U(A)$ via the path $t \mapsto \Lambda(tc)$. The next lemma is implicit in \cite[Lemma 1.7]{thomsen}, but we spell it out since its proof is crucial to us.

\begin{lem}\label{lem:quasi_homotopy}
Let $a,b\in \U(A)$ such that $\|a-b\| < 2$, then $a\sim b$ in $\U(A)$
\end{lem}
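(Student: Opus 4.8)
The plan is to mimic the classical proof that two unitaries within distance $2$ in a unital C*-algebra are homotopic, but carried out at the level of quasi-unitaries using the composition $\cdot$ and the map $\Lambda$. Recall that $u \in \U(A)$ precisely when $1-u \in U(A^+)$. Writing $\alpha = 1-a$ and $\beta = 1-b$, the hypothesis $\|a-b\|<2$ becomes $\|\alpha - \beta\| < 2$, and since $\alpha, \beta$ are unitaries in $A^+$ this means $\|1 - \beta^*\alpha\| = \|\beta^*(\beta - \alpha)\| < 2$, so $-1$ is not in the spectrum of the unitary $\beta^*\alpha$. Hence $\beta^*\alpha = \exp(ic)$ for a self-adjoint $c$ (taking the principal branch of the logarithm, whose spectrum lies in $(-\pi,\pi)$), and crucially $c$ lies in $A$ rather than merely $A^+$, because $\beta^*\alpha - 1 \in A$ and the logarithm of an element of the form $1 + (\text{element of }A)$ whose spectrum avoids $-1$ again has the form $1 + (\text{element of }A)$; equivalently $1 - \exp(ic) = \Lambda(c) \in A$ forces $c \in A$ after inverting $\Lambda$ locally. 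This is the step that must be handled carefully, and it is exactly the content hidden in \cite[Lemma 1.7]{thomsen}.

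Once $c \in A$ is self-adjoint with $\beta^*\alpha = \exp(ic)$, I would consider the path $t \mapsto \beta \exp(itc)$ in $U(A^+)$ from $\beta$ to $\beta\exp(ic) = \alpha$. Translating back via $u \mapsto 1-u$, this gives a path in $\U(A)$; explicitly, the path $g(t) := 1 - \beta\exp(itc)$ satisfies $g(0) = 1-\beta = b$ and $g(1) = 1-\alpha = a$. To see this is genuinely a path in $\U(A)$ (not just $\U(A^+)$) one checks that $1 - g(t) = \beta\exp(itc) \in U(A^+)$ always, and that $g(t) \in A$: indeed $\beta\exp(itc) = \beta(1 - \Lambda(tc)) = \beta - \beta\Lambda(tc)$, and $\Lambda(tc) \in A$ while $\beta - 1 \in A$, so $g(t) = 1 - \beta + \beta\Lambda(tc) \in A$. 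Continuity of $g$ in $t$ is clear from the power series for $\exp$. This produces the desired homotopy $a \sim b$ in $\U(A)$.

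Alternatively, and perhaps more in the spirit of the paper's notation, one can phrase the whole argument using $\cdot$: the relation $a \sim b$ is unchanged if we compose everything on the left by a fixed quasi-unitary, and one can reduce to showing that any $u \in \U(A)$ with $\|u\| < 2$ (equivalently $\|1-u\|<2$, i.e. $1-u$ a unitary avoiding $-1$ in its spectrum) satisfies $u \sim 0$; this is precisely $u = \Lambda(c)$ for the self-adjoint $c$ above, and then $t \mapsto \Lambda(tc)$ is the required path, exactly as observed just before the lemma statement. The reduction uses that $v \mapsto w \cdot v$ is a homeomorphism of $\U(A)$ for fixed $w \in \U(A)$ with inverse $v \mapsto w^* \cdot v$, together with the computation of $\|(1-w)(1-v)\|$ type estimates; I would take $w = b$ (or rather the quasi-unitary corresponding to $\beta^*$) so that $w \cdot a$ corresponds to $\beta^*\alpha$.

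The main obstacle is the bookkeeping needed to guarantee that the logarithm $c$ and all intermediate elements stay inside the non-unital algebra $A$ rather than drifting into $A^+$ — the spectral calculus naturally lives in $A^+$, so one must repeatedly use the fact that $\exp(ic) - 1$ and $\log(1 + x) $ (for $x \in A$ with suitable spectrum) remain in $A$, which is a consequence of functional calculus applied to elements of the ideal $A \triangleleft A^+$ together with the vanishing of these entire/analytic functions at the relevant basepoint. Everything else — continuity of the path, checking the endpoints — is routine. Since the problem explicitly says the argument is "implicit in \cite[Lemma 1.7]{thomsen}", I expect the author's proof to simply extract the path $t \mapsto 1 - \beta\exp(itc)$ (or its $\cdot$-reformulation) from that lemma, and the write-up will be short.
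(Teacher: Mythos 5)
Your argument is correct and is essentially the paper's own proof: the paper forms the quasi-unitary $d := a\cdot b^{\ast}$, notes $\|d\|\le\|a-b\|<2$ so the unitary $1-d=(1-a)(1-b^{\ast})$ has spectrum avoiding $-1$, takes a branch of the logarithm vanishing at $1$ to get a self-adjoint $c\in A$ with $a\cdot b^{\ast}=\Lambda(c)$, and uses the path $t\mapsto\Lambda(tc)\cdot b$ — which is exactly your path $t\mapsto 1-\beta\exp(itc)$ up to the immaterial choice of $\beta^{\ast}\alpha$ versus $\alpha\beta^{\ast}$. The point you flag as delicate (that $c$ lands in $A$, not just $A^{+}$) is handled in the paper precisely as you suggest, by insisting the chosen branch $g$ satisfies $g(1)=0$.
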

\begin{proof}
Consider $A$ as an ideal in $A^+$. If $\|a-b\| < 2$, then $d := a\cdot b^{\ast}$ satisfies
\[
\|d\| = \|1-((1-a)(1-b^{\ast}))\| = \|((1-b) - (1-a))(1-b^{\ast})\| \leq \|a-b\| < 2
\]
Hence, $(1-d)$ is a unitary in $A^+$, whose spectrum does not contain $1$. Therefore, there is a continuous function $g : S^1 \to \R$ such that $g(1) = 0$ and $\exp(ig(x)) = x$ for all $x\in \sigma(1-d)$. Define $c := g(1-d)$, then $c$ is self-adjoint and $a\cdot b^{\ast} = \Lambda(c)$. Thus $a = \Lambda(c)\cdot b\sim b$ in $\U(A)$.
\end{proof}

The next lemma is a variant of \cite[Exercise 2.8]{rordam} for quasi-unitaries.

\begin{lem}\label{lem:quasi_close}
Given $\epsilon > 0$, there is a $\delta > 0$ with the following property: If $A$ is a C*-algebra and $a\in A$ such that $\|a\cdot a^{\ast}\| < \delta$ and $\|a^{\ast}\cdot a\| < \delta$, then there is a quasi-unitary $u \in \U(A)$ such that $\|a-u\| < \epsilon$.
\end{lem}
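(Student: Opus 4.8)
The statement is an $\epsilon$–$\delta$ perturbation result, so the natural approach is functional calculus applied to a single element $a$ that is "almost quasi-unitary." Recall that $u \in A$ is quasi-unitary iff $1-u \in U(A^+)$, and that $a \cdot a^* $ small (resp. $a^* \cdot a$ small) translates into $\|1 - (1-a)(1-a^*)\|$ small (resp. $\|1-(1-a^*)(1-a)\|$ small). Setting $x := 1-a \in A^+$, the hypotheses say that $\|1 - xx^*\| < \delta$ and $\|1 - x^*x\| < \delta$; equivalently $x$ is within $\sqrt{\delta}$ (up to a universal constant) of being a unitary in the sense that its "polar data" is close to $1$. The goal is to produce a genuine unitary $w \in U(A^+)$ of the form $w = 1-u$ with $u \in A$ (this last point is automatic: $w - 1 \in A$ since $x - 1 = -a \in A$ and unitarization only changes the scalar part) with $\|x - w\| < \epsilon$.

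\emph{Key steps.} First, choose $\delta$ so that $\delta < 1$; then $x^*x$ is invertible in $A^+$ (its spectrum lies in $(1-\delta, 1+\delta) \subset (0,\infty)$), and we may form the polar-type correction $w := x (x^*x)^{-1/2}$, which is a genuine unitary in $A^+$ (one checks $w^*w = (x^*x)^{-1/2} x^* x (x^*x)^{-1/2} = 1$ and similarly $ww^* = 1$, using that $xx^*$ and $x^*x$ are both invertible and the identity $x(x^*x)^{-1} x^* = $ the range projection $= 1$ here). This is exactly the element appearing in the retraction $r$ from \cite[Lemma 1.2]{thomsen} quoted in the preliminaries. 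Second, estimate $\|x - w\| = \|x(1 - (x^*x)^{-1/2})\| \le \|x\|\,\|1 - (x^*x)^{-1/2}\|$. Since $x^*x$ has spectrum in $[1-\delta, 1+\delta]$, the function $t \mapsto 1 - t^{-1/2}$ on that interval is bounded by some $\rho(\delta)$ with $\rho(\delta) \to 0$ as $\delta \to 0$, and $\|x\| \le (1+\delta)^{1/2}$; so $\|x-w\| \le (1+\delta)^{1/2}\rho(\delta)$. Third, given $\epsilon$, pick $\delta$ small enough that this quantity is $< \epsilon$ (and $\delta < 1$). Finally, set $u := 1 - w \in A$; then $u \in \U(A)$ and $\|a - u\| = \|(1-x) - (1-w)\| = \|x - w\| < \epsilon$, as required. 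Crucially $\delta$ depends only on $\epsilon$, not on $A$ or $a$, because all estimates are via the universal continuous function $t \mapsto 1 - t^{-1/2}$ on $[1-\delta,1+\delta]$.

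\emph{Main obstacle.} The only subtlety is making sure both $x^*x$ \emph{and} $xx^*$ are controlled; a priori $\|1 - xx^*\| < \delta$ does not follow from $\|1-x^*x\| < \delta$ in a non-unital or merely unital (non-commutative) algebra, which is precisely why the lemma hypothesizes both $\|a\cdot a^*\| < \delta$ and $\|a^*\cdot a\| < \delta$. One must verify that with both in hand, $w = x(x^*x)^{-1/2}$ is two-sided unitary: $ww^* = x(x^*x)^{-1}x^*$, and the claim that this equals $1$ follows because $1 - x(x^*x)^{-1}x^*$ is a positive contraction whose product with the invertible element $xx^*$ (on the left, say, after rearranging) vanishes — or more cleanly, one notes $x^*(1 - x(x^*x)^{-1}x^*) = 0$, hence $(1-x(x^*x)^{-1}x^*)xx^*(1-\cdots) = 0$ forcing $(1-x(x^*x)^{-1}x^*)^{3/2}\cdot(\text{invertible}) = 0$, so $1-x(x^*x)^{-1}x^* = 0$ once $xx^*$ is invertible, which it is since $\|1-xx^*\|<\delta<1$. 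Everything else is routine functional calculus on a fixed compact interval, which is exactly what delivers the uniform-in-$A$ choice of $\delta$.
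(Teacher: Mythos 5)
Your proof is correct and follows essentially the same route as the paper's: both set $x = 1-a$, observe that the hypotheses make $xx^*$ and $x^*x$ invertible with spectrum near $1$, take the unitary polar part $w = x(x^*x)^{-1/2}$, and bound $\|x-w\|$ by functional calculus on a fixed interval (the paper writes the slightly cleaner estimate $\|x-w\| = \|w(1-|x|)\| \le \||x|-1\| < \delta$). The only cosmetic difference is your longer verification that $ww^*=1$; the paper simply notes that invertibility of both $xx^*$ and $x^*x$ makes $x$ itself invertible.
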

\begin{proof}
Fix $0< \delta < \min\{\epsilon,1\}$, and consider $A$ as an ideal in $A^+$. If $b:= (1-a)$, the hypothesis implies that $\|1 - bb^{\ast}\| = \|a\cdot a^{\ast}\| < \delta < 1$, and $\|1-b^{\ast}b\| < 1$ as well. Hence, $bb^{\ast}$ and $b^{\ast}b$ are both invertible, and therefore $b$ and $b^{\ast}$ are also invertible. Thus, $|b| = (b^{\ast}b)^{1/2} \in GL(A^+)$, and $v := b|b|^{-1} \in U(A^+)$. Furthermore, $\sigma(b^{\ast}b) \subset (1-\delta,1+\delta)$, so $\sigma(|b|) \subset ((1-\delta)^{1/2},(1+\delta)^{1/2}) \subset (1-\delta,1+\delta)$. Hence, $\||b|-1\| < \delta$, so that $u:= 1-v \in \U(A)$, and
\[
\|u-a\| = \|v - b\| = \|v(1 - |b|)\| \leq \|1-|b|\| < \delta < \epsilon
\]
as required.
\end{proof}

\begin{lem}\label{lem:local_injective}
Let $X$ be a compact, Hausdorff space and let $A$ be a continuous $C(X)$-algebra. Let $a\in \U(A)$ and $x\in X$ such that $a(x) \sim 0$ in $\U(A(x))$. Then there is a closed neighbourhood $Y$ of $x$ such that $\pi_Y(a) \sim 0$ in $\U(A(Y))$.
\end{lem}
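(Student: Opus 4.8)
The plan is to realise the hypothesis $a(x)\sim 0$ by an explicit formula that can be propagated off the fibre $A(x)$. First I would discretise a path witnessing $a(x)\sim 0$: choose a continuous $\gamma:[0,1]\to\U(A(x))$ with $\gamma(0)=a(x)$ and $\gamma(1)=0$, and, using uniform continuity, pick a partition $0=t_0<t_1<\dots<t_n=1$ with $\|\gamma(t_{i-1})-\gamma(t_i)\|<2$ for every $i$. Running the argument in the proof of \cref{lem:quasi_homotopy} on each consecutive pair produces self-adjoint elements $c_1,\dots,c_n\in A(x)$ with $\gamma(t_{i-1})=\Lambda(c_i)\cdot\gamma(t_i)$. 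Composing these identities and using $\gamma(1)=0$ together with $v\cdot 0=v$ gives a factorisation
\[
a(x)=\Lambda(c_1)\cdot\Lambda(c_2)\cdots\Lambda(c_n).
\]

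Next I would lift this factorisation to $A$. Since $\pi_x:A\to A(x)$ is a surjective $\ast$-homomorphism, each $c_i$ admits a self-adjoint lift $\widetilde c_i\in A$ (take the self-adjoint part of any lift). Put $b:=\Lambda(\widetilde c_1)\cdot\Lambda(\widetilde c_2)\cdots\Lambda(\widetilde c_n)$. Each $\Lambda(\widetilde c_i)$ lies in $\U(A)$ and $\U(A)$ is a group under $\cdot$, so $b\in\U(A)$; moreover $t\mapsto\Lambda(t\widetilde c_1)\cdot\Lambda(t\widetilde c_2)\cdots\Lambda(t\widetilde c_n)$ is a continuous path in $\U(A)$ from $b$ (at $t=1$) to $0$ (at $t=0$), so $b\sim 0$ in $\U(A)$. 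Since $\pi_x$ is compatible with $\cdot$ and with the functional calculus defining $\Lambda$, we get $b(x)=\Lambda(c_1)\cdots\Lambda(c_n)=a(x)$.

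Finally I would transfer the conclusion to a neighbourhood using continuity of the field. The function $\Gamma_{a-b}:X\to\R$, $y\mapsto\|(a-b)(y)\|$, is continuous and vanishes at $x$, so there is an open neighbourhood $U$ of $x$ on which $\Gamma_{a-b}<2$; by regularity of the compact Hausdorff space $X$, choose a closed neighbourhood $Y$ of $x$ with $Y\subseteq U$. Since $A(Y)$ is again a continuous $C(Y)$-algebra with fibres $A(y)$, the norm of $\pi_Y(a-b)$ equals $\max_{y\in Y}\Gamma_{a-b}(y)<2$, hence $\|\pi_Y(a)-\pi_Y(b)\|<2$. As $\pi_Y(a),\pi_Y(b)\in\U(A(Y))$, \cref{lem:quasi_homotopy} gives $\pi_Y(a)\sim\pi_Y(b)$ in $\U(A(Y))$; applying $\pi_Y$ to the path from $b$ to $0$ gives $\pi_Y(b)\sim 0$; and transitivity of $\sim$ yields $\pi_Y(a)\sim 0$, as required.

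The main obstacle is the first step: converting the abstract homotopy $a(x)\sim 0$ into a concrete finite product $\Lambda(c_1)\cdots\Lambda(c_n)$ whose "generators" $c_i$ live in $A(x)$ and therefore lift to $A$ — this is precisely why the proof, rather than just the statement, of \cref{lem:quasi_homotopy} matters. Once that explicit description is available, the remainder is a routine local-triviality argument powered by continuity of $y\mapsto\|(a-b)(y)\|$; the only point needing care is that $\|\pi_Y(c)\|$ really is the supremum of the fibre norms over $Y$, which (with compactness of $Y$) is what makes the estimate strict.
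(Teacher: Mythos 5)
Your proof is correct, but it takes a genuinely different route from the paper at the key step. The paper gets the lifted element $b\in \U_0(A)$ with $b(x)=a(x)$ by invoking \cref{thm:fibration}: the map $\pi_x:\U_0(A)\to\U_0(A(x))$ is a Serre fibration, so the path witnessing $a(x)\sim 0$ lifts to a path $H$ in $\U(A)$ starting at $0$, and $b:=H(1)$ does the job. You instead bypass the fibration theorem entirely: you discretise the path into steps of norm less than $2$, use the proof of \cref{lem:quasi_homotopy} to write $a(x)=\Lambda(c_1)\cdots\Lambda(c_n)$ as a finite $\cdot$-product of exponentials with self-adjoint $c_i\in A(x)$, lift each $c_i$ to a self-adjoint element of $A$, and take $b$ to be the corresponding product upstairs; the group structure of $(\U(A),\cdot)$ and the scaling paths $t\mapsto\Lambda(t\widetilde c_i)$ give $b\sim 0$, and $\pi_x$ intertwines everything since $\Lambda$ is natural. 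From there both arguments coincide: continuity of $y\mapsto\|(a-b)(y)\|$ gives a closed neighbourhood $Y$ with $\|\pi_Y(a-b)\|<2$, and \cref{lem:quasi_homotopy} plus concatenation finishes. Your version is more elementary and self-contained (no homotopy-lifting machinery), which is a genuine gain; what the paper's version buys is finer control that it exploits later, namely that the correcting path $G$ from $\pi_Y(b)$ to $\pi_Y(a)$ can be taken constant equal to $a(x)$ on the fibre at $x$ (this is the refinement quoted in the proof of \cref{lem:local_path}). Your construction admits the same refinement --- since $(a-b)(x)=0$, the logarithm $c$ produced in the last step satisfies $c(x)=0$ --- but if one only wants the lemma as stated, your argument is complete as written.
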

\begin{proof}
Since $\pi_x : \U_0(A) \to \U_0(A(x))$ is a fibration, there is a path $H :[0,1]\to \U(A)$ such that $H(0) = 0$ and $H(1)(x) = a(x)$. Let $b := H(1)$, then since $A$ is a continuous $C(X)$-algebra, there is a closed neighbourhood $Y$ of $x$ such that $\|\pi_Y(a-b)\| < 2$. It follows by \cref{lem:quasi_close} that $\pi_Y(a) \sim \pi_Y(b)$ in $\U(A(Y))$. Furthermore, the path $G : [0,1]\to \U(A(Y))$ is of the form
\[
G(t) = \pi_Y(b)\cdot \Lambda(t\pi_Y(c))
\]
for some self-adjoint element $c \in A$ (since every self-adjoint element in $A(Y)$ lifts to a self-adjoint element in $A$). Since $a(x) = b(x)$, the proof of \cref{lem:quasi_homotopy} in fact ensures that we may choose $c$ such that $c(x) = 0$. Therefore, $\pi_x\circ G(t) = a(x)$ for all $t\in [0,1]$. Concatenating the paths $\pi_Y\circ H$ and $G$, we obtain a path connecting $0$ to $\pi_Y(a)$ in $\U(A(Y))$.
\end{proof}

Our proof of \cref{thm:mainthm} is by induction on the covering dimension of the underlying space. The next theorem is the base case, and it holds even if the space is not metrizable.

\begin{theorem}\label{thm:dim_zero_case}
Let $X$ be a compact Hausdorff space of zero covering dimension, and let $A$ be a continuous $C(X)$-algebra. If each fiber of $A$ is $K$-stable, then so is $A$.
\end{theorem}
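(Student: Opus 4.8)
The goal is to show that if $X$ has zero covering dimension, each fiber $A(x)$ is $K$-stable, and $A$ is a continuous $C(X)$-algebra, then $A$ is $K$-stable. Since $M_m(A)$ is again a continuous $C(X)$-algebra with fibers $M_m(A(x))$ (by \cref{thm:kw_tensor}), and each $M_m(A(x))$ is $K$-stable whenever $A(x)$ is, it suffices to show that the inclusion $\iota : A \to M_2(A)$ induces an isomorphism $k_n(A) \to k_n(M_2(A))$ for all $n \geq -1$; the general inclusion $\iota_m$ is handled by replacing $A$ with $M_{m-1}(A)$. Using \cref{lem:suspension_adjoint} and the fact that the suspension $S^j A = C_0(\R^j) \otimes A$ is again a continuous $C(X)$-algebra with $K$-stable fibers $S^j A(x)$ (apply \cref{thm:kw_tensor} with the nuclear algebra $C_0(\R^j)$, after passing to $X^+$, or argue directly), it is enough to prove that $\iota_* : \pi_0(\U(A)) \to \pi_0(\U(M_2(A)))$ is a bijection for every continuous $C(X)$-algebra with $K$-stable fibers over a zero-dimensional compact Hausdorff $X$.

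\textbf{Surjectivity.} Given $b \in \U(M_2(A))$, I want to find $a \in \U(A)$ with $\iota(a) \sim b$. For each $x \in X$, since $A(x)$ is $K$-stable, $\iota_x(\U(A(x)))$ is a strong deformation retract of $\U(M_2(A(x)))$ by \cref{lem:k_stable_def_retract}; in particular $\eta_x(b) = \iota_x(a_x')$ for some $a_x' \in \U(A(x))$ up to homotopy, and we may lift $a_x'$ to some $a_x \in \U(A)$ with $\|\eta_x(b) - \iota_x(\pi_x(a_x))\| $ small, hence by continuity of the norm functions there is a closed neighbourhood $Y_x$ of $x$ on which $\|\eta_{Y_x}(b) - \iota_{Y_x}(\pi_{Y_x}(a_x))\| < 2$, so $\eta_{Y_x}(b) \sim \iota_{Y_x}(\pi_{Y_x}(a_x))$ in $\U(M_2(A(Y_x)))$ by \cref{lem:quasi_homotopy}. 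Because $X$ is compact and zero-dimensional, I can refine this cover to a \emph{finite partition} of $X$ into disjoint clopen sets $X_1, \ldots, X_N$ with each $X_i \subseteq Y_{x_i}$. Then $A \cong \bigoplus_{i=1}^N A(X_i)$ (the partition case of \cref{lem:cx_algebra_pullback}, iterated, with empty intersections), and similarly $M_2(A) \cong \bigoplus_i M_2(A(X_i))$; defining $a \in \U(A)$ to be the element with $\pi_{X_i}(a) = \pi_{X_i}(a_{x_i})$ on each block gives $\iota(a) \sim b$ by assembling the fiberwise homotopies, which is legitimate since everything decomposes as a direct sum.

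\textbf{Injectivity.} Suppose $a \in \U(A)$ with $\iota(a) \sim 0$ in $\U(M_2(A))$; I must show $a \sim 0$ in $\U(A)$. For each $x$, $\iota_x(\pi_x(a)) \sim 0$ in $\U(M_2(A(x)))$, and since $\iota_x(\U(A(x)))$ is a strong deformation retract of $\U(M_2(A(x)))$, it follows that $\pi_x(a) \sim 0$ in $\U(A(x))$. By \cref{lem:local_injective}, there is a closed neighbourhood $Y_x$ of $x$ with $\pi_{Y_x}(a) \sim 0$ in $\U(A(Y_x))$. Again invoking zero-dimensionality and compactness, refine $\{Y_x\}$ to a finite clopen partition $X_1, \ldots, X_N$; since $A \cong \bigoplus_i A(X_i)$, the path $0 \to a$ is built blockwise from the paths $0 \to \pi_{X_i}(a)$, giving $a \sim 0$ in $\U(A)$.

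\textbf{Main obstacle.} The essential point making the zero-dimensional case clean is that a clopen partition turns $A$ into an honest direct sum, so there are no gluing or overlap compatibility conditions to manage and no genuine homotopy-lifting subtlety beyond what \cref{thm:fibration}, \cref{lem:quasi_homotopy}, and \cref{lem:local_injective} already provide. The only care needed is (i) reducing the full $K$-stability statement (all $\iota_m$, all $k_n$) to the single map $\iota_A : A \to M_2(A)$ on $\pi_0$ via suspension and matrix amplification, checking at each stage that the relevant algebra is still a continuous $C(X)$-algebra with $K$-stable fibers; and (ii) verifying that the local approximations $a_x$ can be chosen with $\pi_x$-images as close as desired to fibers of $b$, so that \cref{lem:quasi_close} or \cref{lem:quasi_homotopy} applies on a neighbourhood — this uses that self-adjoint (or arbitrary) elements lift through the restriction maps $\pi_{Y}$ together with continuity of $x \mapsto \|c(x)\|$. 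I expect (i) to be the most bookkeeping-heavy part, but it is routine; there is no deep obstacle in dimension zero.
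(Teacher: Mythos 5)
Your overall strategy (reduce to $\pi_0$ of $\U(A)\to\U(M_2(A))$ via suspensions and matrix amplification, then use zero-dimensionality to refine the local neighbourhoods to a finite clopen partition so that $A$ becomes an honest direct sum) is exactly the paper's, and your reduction step and your injectivity argument are correct and essentially identical to the paper's.

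There is, however, a genuine gap in your surjectivity argument. You assert that, having found $a_x'\in\U(A(x))$ with $\eta_x(b)\sim\iota_x(a_x')$, you ``may lift $a_x'$ to some $a_x\in\U(A)$ with $\|\eta_x(b)-\iota_x(\pi_x(a_x))\|$ small.'' This is false as stated: if $\pi_x(a_x)=a_x'$, then $\|\eta_x(b)-\iota_x(a_x')\|$ is a fixed number determined by the homotopy class data, and homotopic quasi-unitaries need not be close in norm (the distance can be as large as $2$), so there is nothing to make small, and the passage to a neighbourhood $Y_x$ via continuity of the norm functions does not get off the ground. You flag this yourself as point (ii) of your ``main obstacle,'' but lifting self-adjoint elements through $\pi_Y$ does not resolve it. The missing idea is to lift the \emph{homotopy}, not the element: since $\eta_x:\U_0(M_2(A))\to\U_0(M_2(A(x)))$ is a Serre fibration (\cref{thm:fibration}), the path $g_x$ from $\eta_x(b)$ to $\iota_x(a_x')$ lifts to a path $G_x$ in $\U(M_2(A))$ with $G_x(0)=b$; its endpoint $b_x:=G_x(1)$ is globally homotopic to $b$ and satisfies $\eta_x(b_x)=\iota_x(a_x')$ \emph{exactly}. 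Only now does continuity give a closed neighbourhood $Y_x$ on which $\eta_{Y_x}(b_x)$ is within $1$ of $\eta_{Y_x}(\iota(e_x))$, where $e_x\in A$ is any lift of $a_x'$ (note also that you should not assume $a_x'$ lifts to a quasi-unitary of $A$; the paper lifts it to an arbitrary element $e_x$ and then applies \cref{lem:quasi_close} on $Y_x$ to replace $\pi_{Y_x}(e_x)$ by a genuine quasi-unitary $d_x$, after shrinking $Y_x$ so that $\|\pi_{Y_x}(e_x\cdot e_x^*)\|$ and $\|\pi_{Y_x}(e_x^*\cdot e_x)\|$ are small). With $\iota_{Y_x}(d_x)\sim\eta_{Y_x}(b_x)\sim\eta_{Y_x}(b)$ in hand, the rest of your clopen-partition assembly goes through as written.
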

\begin{proof}
If $A$ is a continuous $C(X)$-algebra, then so is every suspension of $A$. Furthermore, $(S^nA)(x) \cong S^n(A(x))$ by \cref{thm:kw_tensor}, and $S^n(A(x))$ is $K$-stable by \cref{lem:tensor_commutative}. Hence, by \cref{lem:suspension_adjoint}, it suffices to show that, for each $m\geq 2$, the map
\[
(\iota_m)_{\ast} : \pi_0(M_{m-1}(A)) \to \pi_0(M_m(A))
\]
is an isomorphism. However, by \cref{thm:kw_tensor}, each $M_n(A)$ is also a continuous $C(X)$-algebra, with fibers $M_n(A(x))$, which is $K$-stable if $A(x)$ is $K$-stable. Therefore,  suffices to show that the map
\[
\iota_{\ast} : \pi_0(\U(A)) \to \pi_0(\U(M_n(A))
\]
is an isomorphism for each $n\geq 2$. For simplicity of notation, we fix $n=2$.\\

We first consider injectivity. Suppose $a\in \U(A)$ such that $\iota(a) \sim 0$ in $\U(M_2(A))$. Then, for any $x\in X$, $\iota_x(a(x)) \sim 0 \text{ in } \U(M_2(A(x))$. Since $A(x)$ is $K$-stable, $a(x) \sim 0$ in $\U(A(x))$. By \cref{lem:local_injective}, there is a closed neighbourhood $Y_x$ of $x$ such that $\pi_{Y_x}(a) \sim 0$ in $\U(A(Y_x))$. Since $X$ is compact and zero dimensional, we obtain disjoint open sets $\{Y_{x_1}, Y_{x_2}, \ldots, Y_{x_n}\}$ which cover $X$. Then by \cref{lem:cx_algebra_pullback}, 
\[
A \cong A(Y_{x_1})\oplus A(Y_{x_2})\oplus \ldots \oplus A(Y_{x_n})
\]
via the map $b\mapsto (\pi_{Y_{x_1}}(b), \pi_{Y_{x_2}}(b), \ldots, \pi_{Y_{x_n}}(b))$. Since $\pi_{Y_{x_i}}(a) \sim 0$ for each $1\leq i\leq n$, it follows that $a\sim 0$, so $\iota_{\ast}$ is injective. \\

For surjectivity, choose $u \in \U(M_2(A))$, and we wish to construct a quasi-unitary $\omega\in \U(A)$ such that $u\sim \iota(\omega)$. To this end, fix $x\in X$. Then $u(x) \in \U(M_2(A(x))$. Since $A(x)$ is $K$-stable, there exists $f_x \in \U(A(x))$ and a path $g_x : [0,1]\to \U(M_2(A(x))$ such that $g_x(0)  = u(x)$ and $g_x(1) = \iota_x(f_x)$. Choose $e_x \in A$ such that $e_x(x) = f_x$ (Note that $e_x$ may not be a quasi-unitary). \\

Since the map $\eta_x : \U_0(M_2(A)) \to \U_0(M_2(A(x)))$ is a fibration, $g_x$ lifts to a path $G_x : [0,1]\to \U(M_2(A))$ such that $G_x(0) = u$. Let $b_x := G_x(1)$, and  so that $b_x(x) = \iota_x(e_x(x))$. Choose $\delta > 0$ so that conclusion of \cref{lem:quasi_close} holds for $\epsilon = 1$. Since $A$ is a continuous $C(X)$-algebra, there is a closed neighbourhood $Y_x$ of $x$ such that
\[
\|\eta_{Y_x}(b_x) - \eta_{Y_x}(\iota(e_x))\| < 1, 
\|\pi_{Y_x}(e_x^{\ast}\cdot e_x)\| < \delta, \text{ and}
\|\pi_{Y_x}(e_x\cdot e_x^{\ast})\| < \delta
\]
By \cref{lem:quasi_close}, there is a quasi-unitary $d_x \in \U(A(Y_x))$ such that $\|d_x - \pi_{Y_x}(e_x)\| < 1$, so that $\|\iota_{Y_x}(d_x) - \eta_{Y_x}(b_x)\| < 2$. By \cref{lem:quasi_homotopy}, $\iota_{Y_x}(d_x) \sim \eta_{Y_x}(b_x)$ in $\U(A(Y_x))$. Hence, $\iota_{Y_x}(d_x) \sim \eta_{Y_x}(u)$. As before, since $X$ is compact and zero-dimensional, we may choose disjoint, open sets $\{Y_{x_1}, Y_{x_2},\ldots, Y_{x_n}\}$ so that
\[
A \cong A(Y_{x_1})\oplus A(Y_{x_2})\oplus \ldots \oplus A(Y_{x_n})
\]
via the map $a\mapsto (\pi_{Y_{x_1}}(a), \pi_{Y_{x_2}}(a), \ldots, \pi_{Y_{x_n}}(a))$. Similarly,
\[
M_2(A) \cong M_2(A(Y_{x_1}))\oplus M_2(A(Y_{x_2}))\oplus \ldots \oplus M_2(A(Y_{x_n}))
\]
via the map $b \mapsto (\eta_{Y_{x_1}}(b), \eta_{Y_{x_2}}(b),\ldots, \eta_{Y_{x_n}}(b))$. Therefore, there exists $\omega\in \U(A)$ such that $\pi_{Y_{x_i}}(\omega) = d_{x_i}$ for all $1\leq i\leq n$. Furthermore, for each $1\leq i\leq n, \eta_{Y_{x_i}}(\iota(\omega)) = \iota_{Y_{x_i}}(d_{x_i}) \sim \eta_{Y_{x_i}}(u)$ in $\U(A(Y_{x_i}))$, so that $\iota(\omega) \sim u$ in $\U(A)$, as required.
\end{proof}

The next two lemmas help us to extend the above argument to higher dimensional spaces.

\begin{lem}\label{lem:path_homotopy}
Let $X$ be a compact Hausdorff space, and $A$ be a continuous $C(X)$-algebra. Let $f_i :[0,1]\to \U(A), i=1,2$ be two paths, and $x\in X$ be a point such that $\pi_x\circ f_1 = \pi_x\circ f_2$.
\begin{enumerate}
\item There is a closed neighbourhood $Y$ of $x$ and a homotopy $H : [0,1]\times [0,1]\to \U(A(Y))$ such that $H(0) = \pi_Y\circ f_1$ and $H(1) = \pi_Y\circ f_2$.
\item If, in addition, $f_1(0) = f_2(0)$ and $f_1(1) = f_2(1)$, then the homotopy in part (1) may be chosen to be a path homotopy.
\end{enumerate}
\end{lem}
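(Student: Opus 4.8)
The plan is to imitate the proof of \cref{lem:local_injective}, but with the point $x$ replaced by a whole path and the algebra $A$ replaced by a path algebra over it; the single nontrivial ingredient will be \cref{lem:quasi_homotopy}, applied once with a parameter inside the fibre over a small closed neighbourhood of $x$.

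First I would set $d := f_1\cdot f_2^{\ast}$, i.e.\ the path $t\mapsto f_1(t)\cdot f_2(t)^{\ast}$, which is continuous into $\U(A)$ because $\U(A)$ is a topological group under $\cdot$. Two facts are then immediate: $d\cdot f_2 = f_1$ pointwise, and $\pi_x\circ d\equiv 0$, since $\pi_x(f_1(t))$ is a quasi-unitary and equals $\pi_x(f_2(t))$. Next I would view $d$ as an element $D$ of the C*-algebra $C([0,1])\otimes A\cong C([0,1],A)$; by \cref{thm:kw_tensor} this is a continuous $C(X)$-algebra whose fibre at $y$ is $C([0,1],A(y))$, so $D(y)$ has norm $\sup_t\|d(t)(y)\|$, the function $y\mapsto\|D(y)\|$ is continuous, and it vanishes at $x$. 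Hence, just as in \cref{lem:local_injective}, there is a closed neighbourhood $Y$ of $x$ with $\sup_{y\in Y}\|D(y)\|<2$, and therefore $\|\pi_Y(d(t))\| = \sup_{y\in Y}\|d(t)(y)\| < 2$ for every $t\in[0,1]$.

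At this point $\pi_Y\circ d$ and the constant path $0$ are quasi-unitaries of $C([0,1],A(Y))$ at distance less than $2$, so \cref{lem:quasi_homotopy} provides a path between them in $\U(C([0,1],A(Y))) = C([0,1],\U(A(Y)))$; unwinding the exponential law, this is a continuous map $K:[0,1]\times[0,1]\to\U(A(Y))$ with $K(0,\cdot) = \pi_Y\circ d$ and $K(1,\cdot)\equiv 0$. I would then define $H(s,t) := K(s,t)\cdot\pi_Y(f_2(t))$. It is continuous because $\cdot$ is, and since $\pi_Y$ is a $\ast$-homomorphism and $0$ is the identity for $\cdot$ one gets $H(0,\cdot) = \pi_Y\circ(d\cdot f_2) = \pi_Y\circ f_1$ and $H(1,\cdot) = \pi_Y\circ f_2$, which is part (1). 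For part (2), the hypotheses $f_1(0)=f_2(0)$ and $f_1(1)=f_2(1)$ give $d(0)=d(1)=0$, so $D$ in fact lies in $C_0((0,1))\otimes A\cong SA$, still a continuous $C(X)$-algebra (a suspension of $A$, with fibres $S(A(y))$); running the same argument inside $C_0((0,1),A(Y))$ produces a homotopy $K$ with the same properties as before and the additional one that $K(s,0)=K(s,1)=0$ for all $s$, because quasi-unitaries of $C_0((0,1),A(Y))$ vanish at the endpoints. Then $H(s,0) = 0\cdot\pi_Y(f_2(0)) = \pi_Y(f_1(0))$ and $H(s,1) = \pi_Y(f_1(1))$, so $H$ is a path homotopy.

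I do not expect a genuine obstacle: once the dictionary between paths in $\U$ and quasi-unitaries of path algebras is set up, everything is driven by \cref{lem:quasi_homotopy} and the topological-group structure of $\U$. The only part that needs care is the bookkeeping --- justifying via \cref{thm:kw_tensor} that $C([0,1])\otimes A$ and $C_0((0,1))\otimes A$ are continuous $C(X)$-algebras with the stated fibres (legitimate since $C([0,1])$ and $C_0((0,1))\cong C_0(\R)$ are nuclear), using that the norm of a restriction to $Y$ is the supremum of the fibrewise norms, and matching a path in $\U(C([0,1],A(Y)))$ with a homotopy into $\U(A(Y))$ having the prescribed values on $\{0,1\}\times[0,1]$.
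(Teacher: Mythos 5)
Your proof is correct and follows essentially the same route as the paper: localize using the continuity of the fibrewise norm in the path algebra $C[0,1]\otimes A$ (via \cref{thm:kw_tensor}) to get $Y$ with the relevant quasi-unitaries within distance $2$, then invoke \cref{lem:quasi_homotopy}. The only difference is in part (2), where the paper inspects the explicit homotopy $H(s,t)=\pi_Y(f_2(t))\cdot\Lambda(sh(t))$ produced by the proof of \cref{lem:quasi_homotopy} and checks $h(0)=h(1)=0$, whereas you get the endpoint-fixing for free by running the same argument in the suspension $C_0((0,1))\otimes A(Y)$, whose quasi-unitaries vanish at the endpoints --- a slightly more black-box packaging of the same observation.
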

\begin{proof}
For the first part, note that $C[0,1]\otimes A$ is itself a continuous $C(X)$-algebra by \cref{thm:kw_tensor}. Hence, there is a closed neighbourhood $Y$ of $x$ such that $\|\pi_Y\circ f_1 - \pi_Y\circ f_2\| < 2$. The result now follows from \cref{lem:quasi_homotopy}. \\

For the second part, an examination of \cref{lem:quasi_homotopy} shows that the homotopy is implemented by a path $H:[0,1]\times [0,1]\to \U(A(Y))$ given by
\[
H(s,t) = \pi_Y(f_2(t))\cdot \Lambda(sh(t))
\]
where $h(t) = g(1-\pi_Y(f_1)(t)\cdot \pi_Y(f_2)(t)^{\ast})$ for an appropriate branch $g:S^1\to \R$ of the log function such that $g(1) = 0$. Since $f_1$ and $f_2$ agree at end-points, it follows that $h(0) = h(1) = 0$. Hence, $H$ is a path homotopy.
\end{proof}

\begin{lem}\label{lem:local_path}
Let $X$ be a compact Hausdorff space, $A$ be a continuous $C(X)$-algebra, and $x\in X$ be a point such that $A(x)$ is $K$-stable. Let $a\in \U(A)$ be a quasi-unitary and $F : [0,1]\to \U(M_2(A))$ be a path such that
\[
F(0) = 0 \text{ and } F(1) = \iota(a)
\]
Then, there is a closed neighbourhood $Y$ of $x$ and a path $L_Y : [0,1]\to \U(A(Y))$ such that
\[
L_Y(0) = 0, \quad L_Y(1) = \pi_Y(a)
\]
and $\iota_Y\circ L_Y$ is path homotopic to $\eta_Y\circ F$ in $\U(M_2(A(Y)))$.
\end{lem}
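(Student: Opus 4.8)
The plan is to construct a path homotopy on the square $[0,1]\times[0,1]$ at the fibre $x$, lift it to $M_2(A)$ in such a way that three of its four sides are prescribed, and then straighten the remaining side into the diagonal subalgebra over a small closed neighbourhood of $x$. To begin at the fibre: since $F(0)=0$, the path $\eta_x\circ F$ lies in $\U_0(M_2(A(x)))$ and joins $0$ to $\eta_x(\iota(a))=\iota_x(\pi_x(a))$. Because $A(x)$ is $K$-stable, \cref{lem:k_stable_def_retract} gives a strong deformation retraction $D\colon[0,1]\times\U(M_2(A(x)))\to\U(M_2(A(x)))$ of $\U(M_2(A(x)))$ onto $\iota_x(\U(A(x)))$, and I would set $K(s,t):=D(1-s,\eta_x(F(t)))$. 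Since $D$ fixes $\iota_x(\U(A(x)))$ pointwise and both $0=\iota_x(0)$ and $\iota_x(\pi_x(a))$ lie there, $K\colon[0,1]\times[0,1]\to\U_0(M_2(A(x)))$ is a path homotopy whose side $\{s=1\}$ equals $\eta_x\circ F$, whose side $\{s=0\}$ has the form $\iota_x\circ\ell$ for a unique path $\ell\colon[0,1]\to\U(A(x))$ with $\ell(0)=0$ and $\ell(1)=\pi_x(a)$, and whose sides $\{t=0\}$ and $\{t=1\}$ are the constant paths at $0$ and at $\eta_x(\iota(a))$.

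Next I would lift $K$. By \cref{thm:fibration}, $\eta_x\colon\U_0(M_2(A))\to\U_0(M_2(A(x)))$ is a Serre fibration. Let $J$ be the union of the three sides $\{1\}\times[0,1]$, $[0,1]\times\{0\}$, and $[0,1]\times\{1\}$ of the square, and define a partial lift of $K$ over $J$ by taking $F$ on the first of these and the constant paths at $0$ and at $\iota(a)$ on the other two; these agree at the two corners of $J$ because $F(0)=0$ and $F(1)=\iota(a)$, and each covers the corresponding side of $K$. Since the pair $([0,1]\times[0,1],J)$ is homeomorphic to the pair $([0,1]\times[0,1],[0,1]\times\{0\})$, the homotopy lifting property extends this to a map $\widetilde K\colon[0,1]\times[0,1]\to\U_0(M_2(A))$ with $\eta_x\circ\widetilde K=K$ that restricts to the prescribed partial lift on $J$. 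In particular $\widetilde K(0,\cdot)$ is a path in $\U(M_2(A))$ from $0$ to $\iota(a)$ whose matrix entries $w_{ij}(t)\in A$ satisfy $w_{ij}(t)(x)=0$ for $(i,j)\neq(1,1)$, while $w_{11}(t)(x)=\ell(t)$ is a quasi-unitary.

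Finally I would straighten $\widetilde K(0,\cdot)$ over a neighbourhood and assemble. Fix $\epsilon\in(0,1/2)$ and let $\delta>0$ be as in \cref{lem:quasi_close}. The functions $(t,y)\mapsto\|w_{ij}(t)(y)\|$ for $(i,j)\neq(1,1)$, and $(t,y)\mapsto\|(w_{11}(t)\cdot w_{11}(t)^{\ast})(y)\|$ and $(t,y)\mapsto\|(w_{11}(t)^{\ast}\cdot w_{11}(t))(y)\|$, are continuous on $[0,1]\times X$ and vanish on $[0,1]\times\{x\}$, so by compactness of $[0,1]$ there is a closed neighbourhood $Y$ of $x$ on which they are $<\epsilon$, respectively $<\delta$, uniformly in $t$. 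Applying the construction in the proof of \cref{lem:quasi_close} to $\pi_Y(w_{11}(t))$ produces, continuously in $t$, a quasi-unitary $L_Y(t)\in\U(A(Y))$ with $\|L_Y(t)-\pi_Y(w_{11}(t))\|<\epsilon$; since $w_{11}(0)=0$ and $\pi_Y(w_{11}(1))=\pi_Y(a)$ is already a genuine quasi-unitary, that construction returns $L_Y(0)=0$ and $L_Y(1)=\pi_Y(a)$. Now every matrix entry of $\iota_Y(L_Y(t))-\eta_Y(\widetilde K(0,t))$ has norm $<\epsilon$, so this difference has norm $<2$ for all $t$, while $\iota_Y\circ L_Y$ and $\eta_Y(\widetilde K(0,\cdot))$ agree at $t=0$ and at $t=1$; applying the path homotopy from the proof of \cref{lem:quasi_homotopy} pointwise in $t$, exactly as in the proof of \cref{lem:path_homotopy}(2), therefore yields $\iota_Y\circ L_Y\sim_h\eta_Y(\widetilde K(0,\cdot))$ in $\U(M_2(A(Y)))$. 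On the other hand $\eta_Y\circ\widetilde K$ is a path homotopy from $\eta_Y(\widetilde K(0,\cdot))$ to $\eta_Y(\widetilde K(1,\cdot))=\eta_Y\circ F$, since its sides $\{t=0\}$ and $\{t=1\}$ are the constant paths at $0$ and at $\eta_Y(\iota(a))$. Concatenating these two path homotopies gives $\iota_Y\circ L_Y\sim_h\eta_Y\circ F$, which is the assertion.

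The step I expect to be the main obstacle is the lifting: one must arrange the lift $\widetilde K$ so that three whole sides of the square come out exactly as prescribed, for it is precisely this that forces the endpoints of $L_Y$ to equal $0$ and $\pi_Y(a)$ on the nose and the $t$-sides of the final path homotopy to be genuinely constant rather than merely norm-small. A secondary technical point is that the straightening via \cref{lem:quasi_close} has to be done continuously in the path parameter $t$ and must be exact at the two endpoints; this works out only because $w_{11}(0)=0$ and $\pi_Y(a)$ is already a quasi-unitary, so at those two parameter values \cref{lem:quasi_close}'s construction is the identity.
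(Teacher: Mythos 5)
Your proposal is correct, but it takes a noticeably different route from the paper's. The paper first retracts $\eta_x\circ F$ to a path $F'=r_x\circ\eta_x\circ F$ in $\U(A(x))$, lifts $F'$ through the fibration $\pi_x:\U_0(A)\to\U_0(A(x))$ to a path $G_x$ in $\U(A)$ ending at some $b_x$ with $b_x(x)=a(x)$, then compares $\iota\circ G_x$ with $F$ by a fibrewise homotopy that is localized using \cref{lem:local_injective} and \cref{lem:path_homotopy}; the output $L_Y$ is a concatenation $(\pi_Y\circ G_x)\bullet(\pi^{Y'}_Y\circ G_{Y'})$, where the second piece corrects the endpoint from $\pi_Y(b_x)$ to $\pi_Y(a)$. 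You instead build the entire path homotopy as a square at the fibre using the strong deformation retraction, lift it through $\eta_x:\U_0(M_2(A))\to\U_0(M_2(A(x)))$ with \emph{three} sides prescribed, and then push the fourth side into the corner subalgebra over a small $Y$ by a parametrized polar decomposition. This is a clean trade: you avoid the paper's auxiliary neighbourhood $Y'$ and the endpoint-correction path, at the cost of two ingredients the paper never makes explicit but which are sound — the relative homotopy lifting property for Serre fibrations over the pair $(I^2,J)$ with $J$ three sides of the square (standard, since $(I^2,J)\cong(I^2,I\times\{0\})$), and the facts that the construction in \cref{lem:quasi_close} is continuous in its input and acts as the identity on elements that are already quasi-unitary (both visible from its proof, and exactly what makes $L_Y$ continuous with the correct endpoints). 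Your endpoint checks ($w_{11}(0)=0$, $w_{11}(1)=a$) and the uniform norm bound $4\epsilon<2$ needed to run the \cref{lem:quasi_homotopy} homotopy parametrically are all in order, so the argument goes through.
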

\begin{proof}
Let $\iota_x : \U(A(x)) \to \U(M_2(A(x))$ be the natural inclusion map. Since $A(x)$ is $K$-stable, \cref{lem:k_stable_def_retract} implies that there is a continuous function $r_x : \U(M_2(A(x))) \to \U(A(x))$ such that $\iota_x\circ r_x \sim \text{id}_{\U(M_2(A(x)))}$. Furthermore, since $r_x$ is a retract, the function $F' := r_x\circ\eta_x\circ F$ is a path in $\U(A(x))$ such that $F'(0) = 0$ and $F'(1) = a(x)$. Consider the commutative diagram
\[
\xymatrix{
\U(A)\ar[r]^{\iota_A}\ar[d]_{\pi_x} & \U(M_2(A))\ar[d]^{\eta_x} \\
\U(A(x))\ar@<1.ex>[r]^{\iota_x} & \U(M_2(A(x))\ar[l]^{r_x}
}
\]
The map $\pi_x : \U_0(A)\to \U_0(A(x))$ is a fibration, so $F'$ lifts to a path $G_x : [0,1]\to \U(A)$ such that $G_x(0) = 0$. Set $b_x := G_x(1)$, then $\iota_A\circ G_x : [0,1]\to \U(M_2(A))$ is a path such that $\iota_A\circ G_x(0) = 0$ and $\iota_A\circ G_x(1) = \iota_A(b_x)$. Furthermore,
\[
\eta_x\circ \iota_A\circ G_x = \iota_x\circ \pi_x\circ G_x = \iota_x\circ F' = \iota_x\circ r_x\circ\eta_x\circ F
\]
Since $\iota_x(\U(A(x))$ is a strong deformation retract of $\U(M_2(A(x)))$, it follows that there is a path homotopy $\widetilde{H} : [0,1]\times [0,1]\to \U(M_2(A(x)))$ such that, for all $s,t\in [0,1]$,
\[
\widetilde{H}(0,t) = \eta_x\circ F(t), \widetilde{H}(1,t) = \eta_x\circ \iota_A\circ G_x(t), \widetilde{H}(s,0) = 0 ,\widetilde{H}(s,1) = \iota_x(a(x))
\]
The map $\eta_x : \U_0(M_2(A))\to \U_0(M_2(A(x))$ is a fibration, and $\eta_x\circ F$ has a lift, so $\widetilde{H}$ lifts to a homotopy $H_1 : [0,1]\times [0,1]\to \U(M_2(A))$ such that $H_1(0,t) = F(t)$, and $\eta_x\circ H_1 = \widetilde{H}$. Similarly, $\eta_x\circ \iota_A\circ G_x$ lifts to $\iota_A\circ G_x$, so $\widetilde{H}$ lifts to a homotopy $H_2 : [0,1]\times [0,1]\to \U(M_2(A))$ such that $H_2(1,t) = \iota_A\circ G_x(t)$, and $\eta_x\circ H_2 = \widetilde{H}$. Note that
\[
\eta_x\circ F(t) = \eta_x\circ H_2(0,t) \text{ and } \eta_x\circ H_1(1,t) = \eta_x\circ \iota_A\circ G_x(t)
\]
Therefore, by \cref{lem:local_injective} applied to the continuous $C(X)$-algebra $C[0,1]\otimes A$, there is a closed neighbourhood $Y'$ of $x$ and a homotopy $H:[0,1]\times [0,1]\to \U(M_2(A(Y'))$ such that
\[
H(0,t) = \eta_{Y'}\circ F(t), H(1,t) = \eta_{Y'}\circ \iota_A\circ G_x(t), \text{ and } \eta^{Y'}_x\circ H(s,1) = \iota_x(a(x))
\]
and a path $G_{Y'} : [0,1]\to \U(A(Y'))$ such that $G_{Y'}(0) = \pi_{Y'}(b_x), G_{Y'}(1) = \pi_{Y'}(a)$, and $\pi^{Y'}_x\circ G_{Y'}$ is a constant path $a(x)$ in $\U(A(x))$ (the last statement follows from the proof of \cref{lem:local_injective}). Furthermore, $f:[0,1]\to \U(M_2(A(Y'))$ given by $f(s) := H(1-s,1)$ is a path such that $f(0) = \iota_{Y'}\circ \pi_{Y'}(b_x)$ and $f(1) = \iota_{Y'}\circ \pi_{Y'}(a)$. Also,
\[
\eta^{Y'}_x \circ f(s) = \iota_x(a(x)) = \eta^{Y'}_x\circ \iota_{Y'}\circ G_{Y'}(s) \quad\forall s\in [0,1]
\]
Thus, by \cref{lem:path_homotopy} applied to $f$ and $\iota_{Y'}\circ G_{Y'}$, there is a closed neighbourhood $Y$ of $x$ such that $Y\subset Y'$ and $\eta^{Y'}_Y\circ \iota_{Y'}\circ G_{Y'}\sim_h \eta^{Y'}_Y\circ f$ in $\U(M_2(A(Y))$. But $\eta_Y\circ F\sim_h (\eta_Y\circ \iota_A\circ G_x)\bullet (\eta^{Y'}_Y\circ f)$, so $\eta_Y\circ F\sim_h \iota_Y\circ L_Y$ where $L_Y: [0,1]\to \U(A(Y))$ is given by
\[
L_Y := (\pi_Y\circ G_x)\bullet (\pi^{Y'}_Y\circ G_{Y'})
\]
and $L_Y$ satisfies the required conditions.
\end{proof}

\begin{rem}\label{rem:reorganize_cover}
We are now in a position to prove \cref{thm:mainthm}, but first, we need one important fact, which allows us to use induction: If $X$ is a finite dimensional compact metric space, then covering dimension agrees with the small inductive dimension \cite[Theorem 1.7.7]{engelking}. Therefore, by \cite[Theorem 1.1.6]{engelking}, $X$ has an open cover $\mathcal{B}$ such that, for each $U \in \mathcal{B}$,
\[
\dim(\partial U)\leq \dim(X) - 1
\]
Now suppose $\{U_1,U_2,\ldots, U_m\}$ is an open cover of $X$ such that $\dim(\partial U_i)\leq \dim(X)-1$ for $1\leq i\leq m$, we define sets $\{V_i : 1\leq i\leq m\}$ inductively by
\[
V_1 := \overline{U_1}, \text{ and } V_k := \overline{U_k\setminus \left( \bigcup_{i<k} U_i\right)} \text{ for } k>1
\]
and subsets $\{W_j : 1\leq j\leq m-1\}$ by
\[
W_j := \left(\bigcup_{i=1}^j V_i\right)\cap V_{j+1}
\]
It is easy to see that $W_j \subset \bigcup_{i=1}^j \partial U_i$, so by \cite[Theorem 1.5.3]{engelking}, $\dim(W_j) \leq \dim(X)-1$ for all $1\leq j\leq m-1$
\end{rem}

\begin{proof}[Proof of \cref{thm:mainthm}]
Let $X$ be a compact metric space of finite covering dimension, and let $A$ be a continuous $C(X)$-algebra, each of whose fibers are $K$-stable. We wish to show that $A$ is $K$-stable. As in the proof of \cref{thm:dim_zero_case}, it suffices to show that the map
\[
\iota_{\ast} : \pi_0(\U(A))\to \pi_0(\U(M_2(A)))
\]
is bijective. The proof is by induction on $\dim(X)$, so by \cref{thm:dim_zero_case}, we assume that $\dim(X)\geq 1$, and that $A(Y)$ is $K$-stable for any closed subset $Y$ of $X$ such that $\dim(Y)\leq \dim(X)- 1$. \\

For injectivity, suppose $a\in \U(A)$ such that $\iota(a) \sim 0$ in $\U(M_2(A))$. Let $F:[0,1]\to \U(M_2(A))$ be a path such that $F(0) = \iota(a)$ and $F(1) = 0$. For $x\in X$, by \cref{lem:local_path}, there is a closed neighbourhood $Y_x$ of $x$ and a path $L_{Y_x} : [0,1]\to \U(A(Y_x))$ such that
\[
L_{Y_x}(0) = \pi_{Y_x}(a), \quad L_{Y_x}(1) = 0
\]
and $\iota_{Y_x}\circ L_{Y_x}$ is path homotopic to $\pi_{Y_x}\circ F$ in $\U(A(Y_x))$. By \cref{rem:reorganize_cover}, we may choose $Y_x$ to be the closure of a basic open set $U_x$ such that $\dim(\partial U_x)\leq \dim(X)-1$. Since $X$ is compact, we may choose a finite subcover $\{U_1,U_2,\ldots, U_m\}$. Now define $\{V_1,V_2,\ldots, V_m\}$ and $\{W_1,W_2,\ldots, W_{m-1}\}$ as in \cref{rem:reorganize_cover}. We observe that each $V_i$ is a closed set such that $\pi_{V_i}(a)\sim 0$ in $\U(A(V_i))$ since $V_i \subset \overline{U_i}$ for all $1\leq i\leq m$. \\

Note that $W_1 = V_1\cap V_2$, and $\dim(W_1)\leq \dim(X) - 1$. By induction hypothesis, $A(W_1)$ is $K$-stable. Let $H_i:[0,1]\to \U(A(V_i)), i=1,2$ be paths such that $H_i(0) = \pi_{V_i}(a), H_i(1) = 0$, and $\iota_{V_i}\circ H_i \sim_h \eta_{V_i}\circ F$. Let $S:[0,1]\to \U(A(W_1))$ be the path
\[
S(t) := \pi^{V_1}_{W_1}(H_1(t))\cdot \pi^{V_2}_{W_1}(H_2(t))^{\ast}
\]
Note that $S(0) = S(1) = 0$, so $S$ is a loop, and 
\[
\iota_{W_1}\circ S = (\eta^{V_1}_{W_1}\circ \iota_{V_1}\circ H_1)\cdot (\eta^{V_2}_{W_1}\circ \iota_{V_2}\circ H_2^{\ast}) \sim_h \eta_{W_1}\circ F\cdot (\eta_{W_1}\circ F)^{\ast} = 0
\]
Hence, $\iota_{W_1}\circ S$ is null-homotopic. Since the map $(\iota_{W_1})_{\ast} : \pi_1(\U(A(W_1)))\to \pi_1(\U(M_2(A(W_1))))$ is injective, it follows that $S$ is null-homotopic in $\U(A(W_1))$. Since the map $\pi^{V_2}_{W_1} : \U_0(A(V_2))\to \U_0(A(W_1))$ is a fibration, it follows that $S$ has a lift $f:[0,1]\to \U(A(V_2))$ such that $f(0) = f(1) = 0$ and $\pi^{V_2}_{W_1}\circ f = S$. Define $H_2' : [0,1]\to \U(A(V_2))$ be defined by
\[
H_2'(t) := f(t)\cdot H_2(t)
\]
Then $H_2'(0) = \pi_{V_2}(a), H_2'(1) = 0$, and
\[
\pi^{V_1}_{W_1}\circ H_1 = \pi^{V_2}_{W_1}\circ H_2'
\]
By \cref{lem:cx_algebra_pullback}, $A(V_1\cup V_2)$ is a pullback
\[
\xymatrix{
A(V_1\cup V_2)\ar[r]\ar[d] & A(V_1)\ar[d] \\
A(V_2)\ar[r] & A(W_1)
}
\]
so we obtain a path $H : [0,1]\to \U(A(V_1\cup V_2))$ such that $\pi^{V_1\cup V_2}_{V_1}\circ H = H_1$ and $\pi^{V_1\cup V_2}_{V_2}\circ H = H_2'$. In particular, $\pi^{V_1\cup V_2}_{V_1}\circ H(0) = \pi_{V_1}(a)$ and $\pi^{V_1\cup V_2}_{V_2}\circ H(0) = \pi_{V_2}(a)$ so $H(0) = \pi_{V_1\cup V_2}(a)$. Similarly, $H(1) = 0$, so that $\pi_{V_1\cup V_2}(a)\sim 0$ in $\U(A(V_1\cup V_2))$. \\

Now observe that $W_2 = (V_1\cup V_2)\cap V_3$, and $\dim(W_2)\leq \dim(X)-1$. Replacing $V_1$ by $V_1\cup V_2$, and $V_2$ by $V_3$ in the earlier argument, we may repeat the earlier procedure. By induction on the number of elements in the finite subcover, we see that $a\sim 0$ in $\U(A)$. This completes the proof of injectivity of $\iota_{\ast}$.\\

Now consider the surjectivity of $\iota_{\ast}$: Fix $u \in \U(M_2(A))$, and we wish to show that there is a quasi-unitary $\omega \in \U(A)$ such that $u\sim \iota(\omega)$. So fix $x\in X$. Then by $K$-stability of $A(x)$, there exists $f_x \in \U(A(x))$ such that $\eta_x(u) \sim \iota_x(f_x)$. As in the proof of \cref{thm:dim_zero_case}, there is a closed neighbourhood $Y_x$ of $x$ and a quasi-unitary $d_x \in \U(A(Y_x))$ such that
\[
\eta_{Y_x}(u) \sim \iota_{Y_x}(d_x)
\]
As in the first part of the proof, we may reduce to the case where $X = V_1\cup V_2$, and there are quasi-unitaries $d_{V_1} \in \U(A(V_1)), d_{V_2} \in \U(A(V_2))$ such that
\[
\eta_{V_i}(u) \sim \iota_{V_i}(d_{V_i}) \text{ in } \U(M_2(A(V_i)), i=1,2
\]
and if $W := V_1\cap V_2$, then
\[
\dim(W)\leq \dim(X)-1
\]
Fix paths $H_i : [0,1]\to \U(M_2(A(V_i))$ such that $H_1(0) = \iota_{V_1}(d_{V_1})$ and $H_1(1) = \eta_{V_1}(u)$, $H_2(0)=\eta_{V_2}(u)$ and $H_2(1)=\iota_{V_2}(d_{V_2})$ and consider the path $F :[0,1]\to \U(M_2(A(W))$ given by
\[
F := (\eta^{V_1}_W\circ H_1) \bullet (\eta^{V_2}_W\circ H_2)
\]
Then $F(0) = \iota_W\circ \pi^{V_1}_W(d_{V_1})$ and $F(1) = \iota_W\circ \pi^{V_2}_W(d_{V_2})$. By induction, $A(W)$ is $K$-stable, so by \cref{lem:k_stable_def_retract}, there is a retraction $r_W : \U(M_2(A(W)) \to \U(A(W))$. Define $H := r_W\circ F$, then $H : [0,1]\to \U(A(W))$ is a path such that
\[
H(0) = \pi^{V_1}_W(d_{V_1}), \text{ and } H(1) = \pi^{V_2}_W(d_{V_2})
\]
The map $\pi^{V_2}_W : \U_0(A(V_2)) \to \U_0(A(W))$ is a fibration, so there is a path $H' : [0,1]\to \U(A(V_2))$ such that
\[
H'(1) = d_{V_2}, \text{ and } \pi^{V_2}_W\circ H' = H
\]
Define $e_{V_2} := H'(0)$ so that
\[
\pi^{V_2}_W(e_{V_2}) = \pi^{V_1}_W(d_{V_1})
\]
By \cref{lem:cx_algebra_pullback}, $A = A(V_1\cup V_2)$ is a pullback
\[
\xymatrix{
A\ar[r]^{\pi_{V_1}}\ar[d]_{\pi_{V_2}} & A(V_1)\ar[d]^{\pi^{V_1}_W} \\
A(V_2)\ar[r]^{\pi^{V_2}_W} & A(W)
}
\]
so that $\omega := (d_{V_1}, e_{V_2})$ defines a quasi-unitary in $A$. We claim that $\iota(\omega)\sim u$ in $\U(A)$. To this end, define $H_3 : [0,1]\to \U(M_2(A(V_2))$ by
\[
H_3 := H_2\bullet (\iota_{V_2}\circ \overline{H'})
\]
then $H_3(0) = \eta_{V_2}(u)$ and $H_3(1) = \iota_{V_2}(e_{V_2}) = \iota_{V_2}(\pi_{V_2}(\omega)) = \eta_{V_2}(\iota_{V_2}(\omega))$. So if $S : [0,1]\to \U(M_2(A(W))$ is given by
\[
S(t) := \eta^{V_2}_W(H_3(t))\cdot \eta^{V_1}_W(H_1(1-t))^{\ast}
\]
Then $S$ is a path with $S(0) = \eta_W(u)\cdot \eta_W(u)^{\ast} = 0$ and $S(1) = \iota^{V_2}_W(e_{V_2})\cdot \eta^{V_1}_W(\iota_{V_1}(d_{V_1})) = 0$. Furthermore,
\begin{equation*}
\begin{split}
\eta^{V_2}_W\circ H_3 &= (\eta^{V_2}_W\circ H_2)\bullet (\eta^{V_2}_W\circ \iota_{V_2}\circ \overline{H'}) = (\eta^{V_2}_W\circ H_2)\bullet (\iota_W\circ \pi^{V_2}_W\circ \overline{H'}) \\
&= (\eta^{V_2}_W\circ H_2)\bullet (\iota_W\circ \overline{H}) = (\eta^{V_2}_W\circ H_2)\bullet (\iota_W\circ r_W\circ \overline{F}) \\
&\sim_h (\eta^{V_2}_W\circ H_2)\bullet \overline{F}
\end{split}
\end{equation*}
Hence,
\begin{equation*}
\begin{split}
S &\sim_h [(\eta^{V_2}_W\circ H_2)\bullet \overline{F}]\cdot (\eta^{V_1}_W\circ \overline{H_1})^{\ast} \\
&= [(\eta^{V_2}_W\circ H_2)\bullet (\eta^{V_2}_W\circ \overline{H_2})\bullet (\eta^{V_1}_W\circ \overline{H_1})]\cdot (\eta^{V_1}_W\circ \overline{H_1})^{\ast} \\
&\sim_h (\eta^{V_1}_W\circ \overline{H_1})\cdot (\eta^{V_1}_W\circ \overline{H_1})^{\ast} = 0
\end{split}
\end{equation*}
Hence, $S$ is null-homotopic. Once again, the map $\eta^{V_1}_W : \U_0(M_2(A(V_1)) \to \U_0(M_2(A(W))$ is a fibration, so there is a loop $f:[0,1]\to \U(M_2(A(V_1))$ such that $f(0) = f(1) = 0$ and $\eta^{V_1}_W \circ f = S$. Define $H_4 : [0,1] \to \U(M_2(A(V_1))$ by
\[
H_4(t) := f(t)\cdot H_1(1-t)
\]
Then $H_4(0) = \eta_{V_1}(u), H_4(1) = \iota_{V_1}(d_{V_1}) = \eta_{V_1}(\iota(\omega))$. Finally, by construction
\[
\eta^{V_1}_W\circ H_4 = \eta^{V_2}_W\circ H_3
\]
Therefore, the pair $(H_3,H_4)$ defines a path in $\U(M_2(A))$ connecting $u$ to $\iota(\omega)$. This concludes the proof that $\iota_{\ast}$ is surjective.
\end{proof}

\subsection*{Acknowledgements}
The first named author is supported by UGC Junior Research Fellowship No. 1229 and the second named author was supported by SERB Grant YSS/2015/001060.

\bibliographystyle{plain}
\bibliography{k_stable_fields}

\end{document}